\definecolor{labelkey}{rgb}{0,0,1}
\providecommand{\customgenericname}{}
\newcommand{\newcustomtheorem}[2]{%
  \newenvironment{#1}[1]
  {%
   \renewcommand\customgenericname{#2}%
   \renewcommand\theinnercustomgeneric{##1}%
   \innercustomgeneric
  }
  {\endinnercustomgeneric}
}
\theoremstyle{plain}
\newtheorem{THEOREM}{Theorem}[section]
\newtheorem{theorem}[THEOREM]{Theorem}
\newtheorem{lemma}[THEOREM]{Lemma}
\newtheorem{proposition}[THEOREM]{Proposition}
\theoremstyle{definition}
\newtheorem{definition}[THEOREM]{Definition}
\theoremstyle{remark}
\newtheorem{remark}[THEOREM]{Remark}
\newcommand{\thm}[1]{Theorem~\ref{#1}}
\newcommand{\lem}[1]{Lemma~\ref{#1}}
\newcommand{\prop}[1]{Proposition~\ref{#1}}
\newcommand{\sect}[1]{Section~\ref{#1}}
\newcommand{\N}{\ensuremath{\mathbb{N}}}   
\newcommand{\Z}{\ensuremath{\mathbb{Z}}}   
\newcommand{\R}{\ensuremath{\mathbb{R}}}   
\newcommand{\T}{\ensuremath{\mathbb{T}}}   
\def \a {\alpha}
\def \b {\beta}
\def \g {\gamma}
\def \d {\delta}
\def \e {\varepsilon}
\def \k {\kappa}
\def \l {\lambda}
\def \n {\nabla}
\def \s {\sigma}
\def \th {\theta}
\def \D {\Delta}
\def \Th {\Theta}
\def \O {\Omega}
\def \bb {{\bf b}}
\def \bg {{\bf g}}
\def \bk {{\bf k}}
\def \bl {{\bf l}}
\def \bu {{\bf u}}
\def \bv {{\bf v}}   
\def \bw {{\bf w}}
\def \cD {\mathcal{D}}
\def \cE {\mathcal{E}}
\def \cH {\mathcal{H}}
\def \cI {\mathcal{I}}
\def \cM {\mathcal{M}}
\def \cO {\mathcal{O}}
\def \cP {\mathcal{P}}
\def \oS { \overline{S} }
\def \oSr { \overline{Sr} }
\def \ocE {\overline{\mathcal{E}}}
\def \loc {\mathrm{loc}}
\def \one {{\mathds{1}}}
\newcommand{\jap}[1]{\left\langle #1 \right\rangle}
\newcommand{\ave}[1]{ \left[ #1 \right]}
\DeclareMathOperator{\supp}{supp} %
\def \lan {\langle}
\def \ran {\rangle}
\def \p {\partial}
\def \ss {\subset}
\def \GL {Gr\"onwall's Lemma}
\def \CK{Csisz\'ar-Kullback inequality}
\renewcommand{\geq}{\geqslant}
\renewcommand{\leq}{\leqslant}
\def \df  {\, \mbox{d}f}
\def \dv  {\, \mbox{d}v}
\def \dx  {\, \mbox{d}x}
\def \dt  {\, \mbox{d}t}
\def \dy  {\, \mbox{d}y}
\def \dz  {\, \mbox{d}z}
\def \dr  {\, \mbox{d}r}
\def \dw  {\, \mbox{d}w}
\def \dmu  {\, \mbox{d}\mu}
\def \dk  {\, \mbox{d}\kappa}
\def \drho  {\, \mbox{d}\rho}
\def \ddt  {\frac{\mbox{d\,\,}}{\mbox{d}t}}
\def \domain {{\O \times \R^n}}
\def \cMcs {\cM_{\mathrm{CS}}}
\def \cMmt {\cM_{\mathrm{MT}}}
\def \cMfmt {\cM_{\mathrm{\phi}}}
\def \cMseg {\cM_{\mathrm{seg}}}
\def \cMb {\cM_{\b}}
\def\R{\mathbb{R}}
\def\N{\mathbb{N}}
\def\T{\mathbb{T}}
\def\Z{\mathbb{Z}}
\def\vavg{[\bv]_{\rho}}
\def \bbu {\bar{\bu}}
\def \bbv {\bar{\bv}}
\def \bbw {\bar{\bw}}
\def \st {\mathrm{s}}
\begin{document}

\title[Fokker-Planck-Navier-Stokes system]{Unconditional alignment of solutions to the Fokker-Planck-Navier-Stokes system with locally averaged Brinkman force}

\author{Roman Shvydkoy}
\email{shvydkoy@uic.edu}

\author{Trevor Teolis}
\email{tteoli2@uic.edu}

\address{Department of Mathematics, Statistics and Computer Science, University of Illinois at Chicago, Chicago, IL 60607, USA}


\date{\today}

\keywords{alignment, flocking, Cucker-Smale, Vlasov equation, Navier-Stokes equation}

\thanks{\textbf{Acknowledgment.}  
The work of RS  was  supported in part by NSF
grant DMS-2405326 and Simons Foundation}

\maketitle

\begin{abstract}
We study a coupled Fokker-Planck--Navier-Stokes (FPNS) system modeling the dynamics of interacting particles suspended in a viscous incompressible fluid, where the coupling occurs through a locally averaged Brinkman drag force. 
Our main result is the unconditional alignment and synchronization of particle and fluid velocities for all weak solutions, in any dimension, on the periodic domain. 
The proof leverages a new entropy method and a hypocoercivity framework, which together yield quantitative decay estimates and prevent density concentration, a key obstacle in previous analyses.
Our approach applies to a broad class of nonlocal alignment protocols, including the Cucker-Smale model. 
We also prove develop well-posedness theory for global weak solutions with hypoelliptic regularization in any dimension, and global strong solutions in two dimensions.
\end{abstract}



\tableofcontents

\section{Introduction}

The model studied in this note takes origin in the theory of combustion and thin sprays, i.e.  particles or droplets immersed in a fluid with fluid density occupying most of the volume, see \cite{Williams-book, Caf-Papa1983}. It belongs to a class of multiphase systems describing dynamics of the incompressible Navier-Stokes equations, as a model of the ambient fluid, coupled with kinetic description of particles interacting with the fluid through the Brinkman drag force:
\begin{align}
    \label{e:FPNSprior}
    \begin{cases}
        \partial_t f + v \cdot \nabla_x f  = \a \n_v \big( F(f) f \big)+ \beta \nabla_v \cdot \big( (v - \bu) f \big)  + \n_v(\s \n_v f) , \\
        \partial_t \bu + (\bu \cdot \nabla) \bu + \nabla p = \nu \Delta \bu + \gamma  \int_{\R^n} (v - \bu ) f \dv, \\
        \nabla \cdot \bu = 0  , 
    \end{cases}
\end{align}
with $F(f)$ incorporating other internal particle forces such as potential interactions, alignment, etc, while the diffusion coefficient $\s\geq 0$ may generally depend on the macroscopic parameters of the system.  Such systems, called Vlasov- or Fokker-Planck-Navier-Stokes have been the subject of an extensive recent studies, see 
\cite{CarrilloChoiVNS2016, HA-degenerate-diffusion, ChoiKwonEnergyFunctional, kwan-VNS-small-data, BDGM2009, DanShou2025, Dan2024, GJV2004, MV2008} and references therein. A fundamental question to be addressed is synchronization: do  fluid and particle velocities align over time regardless of initial configuration? 

One of the first works on this problem go back Hamdache \cite{H1998} for the Vlasov-Stokes system, but more recently addressed in \cite{DanShou2025, Dan2024, CarrilloChoiVNS2016, HA-degenerate-diffusion, kwan-VNS-small-data} for the type of models we are concerned here. 
These previous works studied the alignment in terms of the energy
\[
	E(t) = \frac 1 2 \int_\domain |v - \bar{\bv}|^2 f \dv \dx + \frac 1 2 \int_\O |\bu - \bar{\bu}|^2 \dx + \frac 1 2 | \bar{\bu} - \bar{\bv}|^2,
\]
where $\bar{\bv}$ and $\bar{\bu}$ denote the averaged velocity of the particles and fluid,
\begin{equation*}
	\label{defn:velocity_averages}
	\bar{\bv} := \int_\domain v f \dv \dx, \qquad \bar{\bu} := \frac{1}{|\Omega|} \int_\Omega \bu \dx. 
\end{equation*}
The (non-coercive) energy law, 
\[
	\ddt E \lesssim -\int_\domain |v - \bar{\bv}|^2 f \dv \dx - \int_\O |\bu - \bar{\bu}|^2 \dx - \int_\domain | v - \bu|^2 f \dv \dx, 
\]
gives rise to partial or conditional aligment results.  
Carrillo, Choi, and Karper~\cite{CarrilloChoiVNS2016}  proved conditional alignment under an $L^{3/2}$ integrability assumption on the particle density for the models with no alignment or noise $\a = \sigma = 0$. For the same model Kwan et al.~\cite{kwan-VNS-small-data}  established total alignment, but for small initial data. 
In connection with the result of \cite{CarrilloChoiVNS2016},
the small data implies that the density remains bounded, which is stronger than the $L^{3/2}$ integrability assumption.
Ha et al.~\cite{HA-degenerate-diffusion} considered the model with multiplicative noise parameter $\s \sim |v - \bar{\bv}|^2$ and alignment force $F$ given by the Cucker-Smale communication protocol $F(f)(t,x,v) = \int_\domain \phi(x-y) (w - v) f(t, y, w) \dw \dy$. In the strong alignment regime, $\a \min \phi > \s$, partial synchronization within the particle and fluid bodies separately is proved on 2D-torus, $\T^2$,
\[ 
	\int_\domain |v - \bar{\bv}|^2 f \dv \dx \to 0, \qquad \int_\O |\bu - \bar{\bu}|^2 \dx \to 0, \qquad \int_\domain | v - \bu|^2 f \dv \dx \to 0. 
\]
Small data synchronization for Vlasov-Navier-Stokes system with no alignment $\a=0$ or noise $\s=0$ was established by Danchin \cite{Dan2024}, and for the same system more recently Danchin and Shou  \cite{DanShou2025} prove  alignment with algebraic rate $\cE \lesssim 1/t$  for large data on $\T^2$ (via an original use of the Trudinger inequality). The convergence rate improves to exponential for small data $\|f_0\|_\infty <\e$. Despite being limited to  two dimensions, this is the only unconditional result on the synchronization problem we could trace.

In this note, we  prove alignment and synchronization for any data and without any restrictions on mutual sizes of forces in 2D and 3D using the  methodology developed for collective dynamics models in \cite{Shv-weak,Shv-EA}.
In fact, it is a consequence of a much stronger and more descriptive asymptotic result which states exponential relaxation of $f$ to the Maxwellian  equilibrium while at the same time the fluid $\bu$ relaxes to a constant hydrostatic state, all centered at a common  vector $\bar{\bw}$.   This result requires two distinct modification of the forces:  a modification of the drag force by local averaging to stabilize oscillations of the fluid velocity, and a special form of the noise which depends on the communication strength between particles, but puts no restrictions on its size.  Specifically, we consider the following variant of \eqref{e:FPNSprior}:
\begin{align}
    \label{e:FPNSe}
    \begin{cases}
        \partial_t f + v \cdot \nabla_x f +  \alpha \nabla_v \cdot ( \st_\rho (\vavg - v) f )  = \beta \nabla_v \cdot \big( (v - \bu_\epsilon) f \big) +  \sigma ( \beta + \alpha \st_\rho) \Delta_v f, \\
        \partial_t \bu + (\bu \cdot \nabla) \bu + \nabla p = \nu \Delta \bu + \gamma  \int_{\R^n}\left( (v - \bu_\epsilon ) f \right)_\epsilon\dv, \\
        \nabla \cdot \bu = 0  . 
    \end{cases}
\end{align}
Here, $f(t,x,v)$ is the particle distribution, $\bu$ and $p$ are the fluid velocity and pressure, $\rho$ and $\rho\bv$ are the macroscopic momentum and density of $f$, respectively. The terms $\st_\rho$ and $\vavg$ encode a local strength and averaging, respectively. For example, the alignment force corresponding to the classical Cucker-Smale protocol, introduced in \cite{CS2007a} would be given  by $\st_\rho = \rho \ast \phi$, $ \ave{\bv}_\rho = \frac{(\bv \rho)\ast \phi}{\rho\ast \phi}$, see \ref{CS} below and \sect{s:EA} for many other examples. The parameters $\alpha, \beta, \gamma, \nu, \sigma$ are arbitrary positive constants, and $\epsilon$ denotes a mollification scale, with $\bu_\e$ denoting the standard mollification. We keep all of these parameters fixed.  Now, let us comment on our particular choice of the noise and the drag force.

The choice of the diffusion coefficient $ \sigma (\b + \a \st_\rho)$ is motivated by the fact that it matches the combined coefficient in front of the $v$-transport: $\b \n_v(v f) + \a \n_v(\st_\rho v f) = (\b + \a \st_\rho) \n_v(v f)$. Consequently, the equation takes a classical form of the Fokker-Planck equation with a drift,
\begin{equation}\label{e:FPbs}
\begin{split}
  \partial_t f + v \cdot &\nabla_x f  =  \bb \cdot \nabla_v  f  +\st \nabla_v \cdot ( \s \n_v f + vf ) ,\\
\bb  &=   -\b \bu_\e- \a \st_\rho \vavg,  \quad  \st = \b + \a\st_\rho.
  \end{split}
\end{equation}
Such a system admits a global Maxwellian equilibrium $\mu$ and the mechanism of relaxation $f \to \mu$ is facilitated through the hypoelliptic structure of the Fokker-Planck force and  the $x$-transport. 

The choice of the locally averaged drag force, however small $\e>0$ might be, is necessary to stabilize the hypocoersivity estimates which  require the norm $\| \n_x \bu\|_\infty$ to remain uniformly bounded. Such a bound in 3D stumbles upon the fundamental issue of global regularity for the Navier-Stokes system, and also in 2D due to lack of any uniform a priori bound. The mollified drag removes this issue by bounding $\| \n_x \bu_\e \|_\infty$ with the total energy controlled at all time.

The system defined this way has a free decaying energy
\begin{equation}
	\ocE =  \sigma  \int_{\domain} f \log \frac{f}{\mu_{\bbv}} \dv \dx  + \frac{\b}{2\g} \int_\Omega |\bu - \bbu|^2 \dx + \frac{\b |\O|}{2(\g +  \b |\O|)} |\bbu - \bbv|^2,
\end{equation}
which in and of itself is suggestive of alignment.  However, as in all the previous attempts surveyed above, working with the energy alone is not sufficient to produce unconditional asymptotic decay $\ocE \to 0$ due to the lack of coercivity in the energy law.  The main new idea exercised in this note is to apply the Bakry-Emery approach  in the spirit of \cite{Shv-EA, Shv-weak}. To achieve this goal, the following key ingredients are addressed:

\begin{itemize}
 \item We prove local existence of classical solutions for thick data (\thm{t:lwp}), and global existence of weak solutions for any data in any dimensions (\thm{t:weak}). Furthermore, we demonstrate, through the global hypoellipticity analysis, that the kinetic component $f$ regularizes in weighted Sobolev spaces of arbitrary order instantaneously (\prop{p:gh}). Although the fluid component $\bu$ may still remain rough, this is sufficient to carry out the hypocoersivity estimates. In 2D on the other hand, the pair $(f,\bu)$ regularizes instantaneously by the classical theory.
   \item The free energy decay $\dot{\ocE} \leq 0$, allows to control the $L\log L$-norm of the particle density $\rho$ uniformly in time which prevents the density from concentrating to a Dirac. This control on concentration is sufficient to restore  the synchronization component $|\bbu - \bbv|$ in the energy law (see \sect{ss:centered-entropy}), a key obstacle in previous analyses.
	\item We establish  an \emph{unconditional alignment} result for a broad class of systems with nonlocal alignment and non-degenerate diffusion, in any dimension, see \thm{thm:main} for the full statement. 
\end{itemize}

In \sect{s:EA} we will discuss the properties of the alignment force needed to prove each of the ingredients listed above. While the list of such properties requires some introduction, let us state here the full result as it applies to the classical Cucker-Smale-based model. We refer to Section \ref{ss:notation} for notation.

\begin{theorem} (FPNS alignment with Cucker-Smale protocol) \label{}
	Consider the Cucker-Smale communication protocol, $\st_\rho = \rho \ast \phi$, $\vavg = (\bv \rho) \ast \phi / (\rho \ast \phi)$, and suppose the kernel is Bochner-positive, i.e. $\phi = \psi \ast \psi$ for some smooth $\psi\geq 0$. Then for  any initial condition $f_0 \in L^\infty$, $|v|^q f \in L^1$, $q \geq n+4$, and $\bu_0\in L^2(\O)$, $\n \cdot \bu_0 = 0$, there exists a global weak solution of Leray-Hopf class satisfying energy inequality
\begin{equation} \label{e:CSenlaw}
	\ddt \ocE 
	  \leq  - \a I_{vv}^{\bv, \rho \ast \phi}  - \b I_{vv}^{\bu_\e} 
	 - \frac \a 2 \int_\O |\bv(x) - \bv(y)|^2 \phi(x-y) \rho(y) \rho(x) \dy \dx  -   \frac{\b \nu}{\g} \int_\Omega |\nabla \bu|^2 \dx.
\end{equation}
  Any such solution is renormalized in the DiPerna-Lions sense for the $f$-component, and for any index $m\in \N$ it regularizes into weighted Sobolev space instantaneously:
	\[
	\|f(t) \|_{H^m_q} \leq C(t_0,T,m), \quad t\in [t_0,T].
\]
Furthermore, for any such solution $(f,\bu)$, $f$ relaxes exponentially fast to a Maxwellian distribution centered around a vector $\bbw$ and $\bu$ converges to $\bbw$:
	\[
	\| f- \mu_{\bbw} \|_1 + \| \bv - \bbw\|_{L^2(\rho)}+ \| \bu - \bbw \|_2 \leq c_1e^{-c_2 t}.
	\]
	The constants $c_1,c_2$ depend only on the initial condition and the parameters of the system. The vector $\bbw$ can be determined from the initial condition 
	\[
	\bbw = \frac{\g \bbv_0 + \b |\O| \bbu_0}{\g + \b |\O|}.
	\]
	\end{theorem}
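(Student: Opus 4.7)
The plan is to read this theorem as a concrete instantiation of the general alignment Theorem~\ref{thm:main} for the Cucker--Smale choice $\st_\rho = \rho \ast \phi$, $\vavg = (\bv\rho)\ast\phi/(\rho\ast\phi)$, and to verify that the Bochner positivity $\phi = \psi\ast\psi$ supplies exactly the structural ingredients required. First I would observe that for such a kernel one has the symmetrization
\[
\int_\O \st_\rho |\vavg - \bv|^2 \rho \dx = \frac{1}{2}\iint_{\O\times\O} \phi(x-y)|\bv(x)-\bv(y)|^2 \rho(x)\rho(y) \dy \dx \geq 0,
\]
which both makes the alignment term a genuine dissipation and furnishes the explicit double integral appearing in \eqref{e:CSenlaw}. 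A parallel symmetrization kills the alignment contribution to $\ddt \bbv$, and combined with the mollified Brinkman coupling (and incompressibility, which gives $\int \bu_\e \rho = \int \bu \rho_\e$) it yields the exact momentum conservation
\[
\gamma\bbv(t) + \beta|\O|\bbu(t) = \gamma\bbv_0 + \beta|\O|\bbu_0,
\]
which pins down the candidate limit vector $\bbw$ in the statement.

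With $\phi$ smooth and $\st_\rho, \vavg$ accordingly bounded and regular, the hypotheses of Theorem~\ref{t:weak} and Proposition~\ref{p:gh} are met; granted the moment assumption $|v|^q f_0 \in L^1$ with $q\geq n+4$, these results produce a Leray--Hopf weak solution which is renormalized in $f$ and instantaneously enters $H^m_q$ in $x$--$v$ for every $m$. The energy inequality \eqref{e:CSenlaw} then follows by differentiating the three pieces of $\ocE$ against \eqref{e:FPbs}: the relative entropy piece generates the two hypocoercive Fisher informations $I_{vv}^{\bv,\rho\ast\phi}$ and $I_{vv}^{\bu_\e}$ and, via the symmetrization above, the alignment double integral, while the $L^2$-piece of $\bu$ produces $\frac{\beta\nu}{\gamma}\int|\n\bu|^2$ by integration by parts, the convective term vanishing by incompressibility.

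The main obstacle, and the novel step, is upgrading the non-coercive dissipation in \eqref{e:CSenlaw} to exponential decay of $\ocE$. The raw dissipation controls relaxation of $f$ toward a Maxwellian centered on $\bbv$, spatial decorrelation of $\bv$, and convergence of $\bu$ toward $\bbu$, but is insensitive to the synchronization gap $|\bbu - \bbv|^2$. To close the gap I would invoke the Bakry--\'Emery apparatus of Section~\ref{ss:centered-entropy}: the free-energy bound furnishes a uniform $L\log L$ estimate on $\rho$ that precludes Dirac concentration and thereby gives a uniform averaged lower bound on $\rho\ast\phi$, which in turn upgrades the alignment dissipation to genuine control of $\bv - \bbv$ in $L^2(\rho)$; the hypoelliptic $x$--$v$ commutator estimates supplied by Theorem~\ref{thm:main} then relay coercivity from the $v$-Fisher terms and the viscous Dirichlet form to the full energy $\ocE$, producing a differential inequality $\ddt \ocE \leq -c\, \ocE$ after a short time. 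A Csisz\'ar--Kullback step promotes the resulting exponential decay of relative entropy to $\|f - \mu_\bbw\|_1 \lesssim e^{-c_2 t}$, and combining with the conservation law and the Poincar\'e inequality for $\bu - \bbu$ yields the full estimate and the identification of $\bbw$.
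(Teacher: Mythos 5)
Your overall strategy is correct and matches the paper's: this theorem is an instantiation of \thm{thm:main} for \ref{CS} with Bochner-positive kernel, combined with \thm{t:weak} (global weak existence and energy inequality), \prop{p:renorm} (renormalization), and \prop{p:gh} (instantaneous gain of $H^m_q$ regularity). The momentum conservation law you cite is \eqref{e:X1}, and the Bochner positivity is precisely what makes \ref{CS} ball-positive, hence conservative with spectral gap controlled by thickness, which are the hypotheses \thm{thm:main} needs.

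However, there is a genuine error in the identity you claim produces the double integral. You assert
\[
\int_\O \st_\rho\, |\vavg - \bv|^2 \rho \dx \;=\; \frac{1}{2}\iint_{\O\times\O} \phi(x-y)\,|\bv(x)-\bv(y)|^2\, \rho(x)\rho(y)\, \dy \dx ,
\]
but this is false, even with $\phi = \psi\ast\psi$. Expanding the square and writing everything in $L^2(\kappa_\rho)$, the left side equals
\[
\|\bv\|_{L^2(\kappa_\rho)}^2 - 2(\bv,\vavg)_{\kappa_\rho} + \|\vavg\|_{L^2(\kappa_\rho)}^2 ,
\]
whereas the double integral symmetrizes exactly the two-term expression actually present in \eqref{e:enlaw}, namely
\[
\|\bv\|_{L^2(\kappa_\rho)}^2 - (\bv,\vavg)_{\kappa_\rho} \;=\; \frac{1}{2}\iint_{\O\times\O} \phi(x-y)\,|\bv(x)-\bv(y)|^2\, \rho(x)\rho(y)\, \dy \dx .
\]
These differ by $\|\vavg\|_{L^2(\kappa_\rho)}^2 - (\bv,\vavg)_{\kappa_\rho}$, which does not vanish in general (take $\rho \equiv 1$: one quickly reduces to requiring $\widehat\phi(k)^2 = \widehat\phi(k)$ for all frequencies, which Bochner positivity does not guarantee). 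The structural point you have missed is that the local variance $\int_\domain \st_\rho |v - \bv|^2 f\,\dv\dx$ is already absorbed into the Fisher information $I^{\bv,\st_\rho}_{vv}$ (see the completion-of-the-square step in \sect{ss:enen} following \eqref{eqn:alignment_terms}), and only the residual bilinear piece $\|\bv\|^2_{L^2(\kappa_\rho)} - (\bv,\vavg)_{\kappa_\rho}$ remains to be symmetrized into the Cucker--Smale double integral in \eqref{e:CSenlaw}. With that correction the derivation of the energy law goes through, and the remainder of your argument (concentration control via $L\log L$, hypocoercivity, Csisz\'ar--Kullback) is in line with \sect{ss:centered-centered} and \sect{ss:hypo}.
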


Let us note some of the features of the energy law for this particular case \eqref{e:CSenlaw} for illustration.
The alignment term controls the particle alignment provided $\rho \ast \phi$ is bounded from below. 
For general models, the control will come from a spectral gap condition on the averaging $[\cdot]_\rho$, developed in \cite{Shv-EA}, which is in turn controlled by a bound from below on an averaged density (the so-called ``thickness'').  
Comparing to the $E$-law for \eqref{e:FPNSprior}, the `drag' term $\int_\domain |v - \bu|^2 f \dv \dx$, which had provided the link to control $|\bbu - \bbv|$, is missing. 
However, the link here is hidden in Fisher information $I_{vv}^{\bu_\e}$, which for instance controls the macroscopic quantity $\int_\O |\bv - \bu_\e|^2 \rho \dx$. 
This gives an idea of how the link can be recovered, but closing the estimate on the full energy requires a hypocoercivity argument, which is presented in Section \ref{ss:hypo}.


\subsection{Notation} \label{ss:notation}

We will assume that the flock has unit mass $\int_\domain f \dv \dx = 1$. 
We denote the total momenta of particles and fluid by
\[
\bar{\bv} =  \int_\domain  v f \dv \dx, \quad \bar{\bu} = \frac{1}{|\O|} \int_\Omega  \bu \dx.
\]
We use notation for  various normalized Maxwellians 
\begin{equation*}\label{}
\begin{split}
\mu & = \frac{1}{|\O|(2\pi\s)^{n/2}} e^{-|v|^2 / 2\s} , \quad \mu_{\rho}  = \frac{\rho(t,x)}{(2\pi\s)^{n/2}} e^{-|v|^2 / 2\s} \\
\mu_{\bg} & = \frac{1}{|\O|(2\pi\s)^{n/2}} e^{-|v - \bg|^2 / 2\s}, \quad \mu_{\rho, \bg}  = \frac{\rho}{(2\pi)^{n/2}} e^{-\frac{|v - \bg|^2} 2}.
\end{split}
\end{equation*}
We denote the basic relative entropy by 
\begin{equation}\label{e:H}
\cH = \sigma  \int_{\domain} f \log \frac{f}{\mu} \dv \dx.
\end{equation}
The partial Fisher information with respect to $\bg$ is denoted by
\[
	I_{vv}^{\bg} = \int_\domain \frac{|\sigma \nabla_v f + (v - \bg) f|^2}{f} \dv \dx,
\]
and, weighted by the strength, 
\[
	I_{vv}^{\bg, \st_\rho} = \int_\domain \st_\rho \frac{|\sigma \nabla_v f + (v - \bg) f|^2}{f} \dv \dx. 
\]
We use $\kappa_\rho$ as a shorthand for the measure $\rho \st_\rho \dx$ and therefore the norm $\| \cdot \|_{L^2(\kappa_\rho)}$ and dot product $(\cdot, \cdot)_{L^2(\kappa_\rho)}$ refer to the $L^2$ norm and dot product with respect to the measure $\kappa_\rho$. 


We use the following notation for weighted Lebesgue spaces
\[
L^p_q : = \left\{ f:  \int_\domain \jap{v}^q f^p \dv \dx <\infty  \right\}, \quad \jap{v} = (1+|v|^2)^{\frac12}.
\]

A proper definition of Sobolev spaces consistent with non-homogeneous nature of the Fokker-Planck operator in \eqref{e:FPbs} must incorporate regressive weights:
\begin{equation}\label{e:Sobdef}
H^{m}_q(\domain) =  \left\{ f :  \sum_{ 2|\bk| + | \bl | \leq 2m}   \int_\domain  \jap{v}^{q - 2|\bk| - | \bl |  } | \p^{\bk}_{x} \p_v^{\bl} f |^2 \dv\dx <\infty \right\}.
\end{equation}
If $q,m$ are large enough, the $H^m_q$-metric controls the Fisher information as well as higher order functional needed in the hypocoercivity estimates, see \cite{TV2000}.

\subsection{A review of environmental averaging models}\label{s:EA}
Let $\O=\T^n$.  Denote by $\cP(\O)$ the set of probability measures on $\O$. An {\em  environmental averaging model} is a family of pairs
\begin{equation}\label{ }
\cM = \{ (\st_\rho, \ave{\cdot}_\rho): \rho \in \cP(\O)\},
\end{equation}
where $\st_\rho: \O \to \R_+$, $\st_\rho \in C(\O)$, represents a (specific) scalar communication strength, 
and $\ave{\cdot}_\rho$ is a family of order-preserving bounded linear operators on  $L^\infty(\rho)$  and $L^2(\rho)$ with $\ave{1}_\rho = 1$. The concept was first introduced in \cite{Shv-EA} however here we will adopt a more narrow version, which postulates that the weighted averages are given in the form of an integral operator
\begin{equation}\label{e:warep}
\bw_\rho=  \st_\rho \ave{\bv}_\rho= \int_\O \phi_\rho(x,y) \bv(y) \drho(y), \quad \rho\text{-a.e.}
\end{equation}
represented by a non-negative communication reproducing kernel $\phi_\rho\in L^1(\drho \otimes \drho)$, $\rho\in \cP$, satisfying
\begin{equation}\label{e:ker-s}
\int_\O \phi_\rho (x,y) \drho(y) = \st_\rho(x), \quad \rho\text{-a.e.}
\end{equation}

The classical example of an environmental averaging comes from the  Cucker-Smale alignment model introduced in \cite{CS2007a,CS2007b}
\begin{equation}\label{e:CS}
\dot{x}_i = v_i, \quad \dot{v}_i =\sum_{j=1}^N m_j \phi(x_i-x_j) (v_j - v_i). 
\end{equation}
Here, $\phi : \R^n \to \R_+$ is a radially symmetric decreasing smooth kernel.
The momentum equation can be expressed as a weighted averaging of flock velocities, 
\[
\dot{v}_i = \sum_{j=1}^N m_j \phi(x_i-x_j) \left( \frac{\sum_{j=1}^N m_j \phi(x_i-x_j)v_j}{\sum_{j=1}^N m_j \phi(x_i-x_j)} - v_i \right).
\]
This corresponds to the model $\cM$ with components given by
\begin{equation}\label{CS}
\st_\rho = \rho \ast \phi, \qquad \ave{\bv}_\rho = \frac{(\bv \rho)\ast \phi}{\rho\ast \phi}.\tag{$\cMcs$}
\end{equation}
The main feature of Cucker-Smale system is that unlike previous results it provides a simple criterion for asymptotic alignment and flocking without reliance on connectivity of the system but just in terms of the communication kernel itself.

\begin{theorem}[\cite{CS2007a,CS2007b,HT2008,HL2009}]\label{t:CSintro}
If the kernel is fat tail, $\int_0^\infty \phi(r) \dr = \infty$, then solutions to \eqref{e:CS} align exponentially fast to the conserved mean velocity $\bar{v} = \frac{1}{ \sum_{j=1}^N m_j} \sum_{j=1}^N m_j v_j$, while the flock remains bounded  
\[
\max_{i=1,\ldots,N} |v_i - \bar{v}| \leq C e^{-\d t}, \qquad \max_{i,j=1,\ldots,N} |x_i - x_j| \leq \bar{D},
\]
where $C,\d,\bar{D}$ depend only on the initial condition and parameters of the kernel. 

Similar statements hold for the kinetic and macroscopic descriptions of  \eqref{e:CS}, see \cite{CFRT2010,TT2014}.
\end{theorem}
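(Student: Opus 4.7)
The plan is to follow the classical Lyapunov scheme of Ha--Liu and Ha--Tadmor, which reduces the theorem to a two-observable ODI for the maximal velocity spread and position spread. First, summing the velocity equations in \eqref{e:CS}, the symmetry $\phi(x_i - x_j) = \phi(x_j - x_i)$ combined with the antisymmetry of $v_j - v_i$ forces $\sum_i m_i \dot v_i = 0$, so the weighted mean $\bar v$ is conserved. Centering $v_i \mapsto v_i - \bar v$, we may assume $\bar v = 0$ without loss of generality.

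The central step is to work with the two scalar observables
\begin{equation*}
\mathcal{V}(t) := \max_{1\le i,j\le N}|v_i(t) - v_j(t)|, \qquad \mathcal{D}(t) := \max_{1\le i,j\le N}|x_i(t) - x_j(t)|.
\end{equation*}
Both are Lipschitz in $t$ and hence, by Rademacher's lemma, differentiable a.e.; at a point of differentiability we may pick an extremal pair $(i^*, j^*)$ attaining $\mathcal{V}(t)$. Writing the equation for $v_{i^*} - v_{j^*}$, using $m_i \ge 0$, $\sum_i m_i = 1$, and the monotonicity of $\phi$ (so $\phi(x_i - x_k) \ge \phi(\mathcal{D})$ in the relevant coupling terms), a standard sign/extremality argument gives
\begin{equation*}
\frac{d^+}{dt}\mathcal{V}(t) \le -\phi(\mathcal{D}(t))\,\mathcal{V}(t), \qquad \left|\frac{d^+}{dt}\mathcal{D}(t)\right| \le \mathcal{V}(t).
\end{equation*}
The right-hand side of the first bound uses that the contribution of non-extremal indices is non-positive after projection onto $v_{i^*} - v_{j^*}$, while the extremal $k = i^*$ or $k = j^*$ contribution provides the strict damping $-\phi(\mathcal{D})\mathcal{V}$.

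Setting $\Phi(r) := \int_0^r \phi(s)\,ds$ and $\mathcal{L}(t) := \mathcal{V}(t) + \Phi(\mathcal{D}(t))$, the two inequalities combine to
\begin{equation*}
\frac{d^+}{dt}\mathcal{L}(t) \le -\phi(\mathcal{D})\mathcal{V} + \phi(\mathcal{D})\mathcal{V} = 0,
\end{equation*}
so $\mathcal{L}$ is non-increasing. The fat-tail hypothesis $\Phi(\infty) = \infty$ then forces $\mathcal{D}(t) \le \bar D$ for all $t$, where $\bar D$ is defined implicitly by $\Phi(\bar D) = \mathcal{V}(0) + \Phi(\mathcal{D}(0))$. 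Feeding this uniform bound back into the velocity inequality yields $\frac{d^+}{dt}\mathcal{V} \le -\phi(\bar D)\,\mathcal{V}$, and a Gronwall comparison gives $\mathcal{V}(t) \le \mathcal{V}(0) e^{-\delta t}$ with $\delta := \phi(\bar D) > 0$. Since $\max_i |v_i - \bar v| \le \mathcal{V}(t)$, the claimed exponential alignment and diameter bound follow with constants depending only on the initial data and the parameters of $\phi$.

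The only nontrivial technical point is the non-smoothness of the maxima $\mathcal{V}, \mathcal{D}$, which is what the Rademacher/Danskin machinery is designed to handle; apart from that, the argument is essentially algebraic. For the kinetic and macroscopic versions mentioned in the last sentence of the statement, I would note that the same Lyapunov pair transfers to the support diameters of $f(t, \cdot, \cdot)$ propagated along Vlasov characteristics, or to the $L^\infty$-spread of the hydrodynamic velocity, with identical closure of the ODI, as carried out in \cite{CFRT2010, TT2014}.
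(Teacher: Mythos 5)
The paper does not prove Theorem~\ref{t:CSintro}; it is stated as a background result and attributed directly to the cited references \cite{CS2007a,CS2007b,HT2008,HL2009}, so there is no in-paper proof to compare against. Your argument is a correct reproduction of the classical Ha--Liu/Ha--Tadmor Lyapunov scheme from those references: conservation of $\bar v$ by the symmetry of $\phi$, the coupled differential inequalities for the velocity and position diameters $\mathcal{V},\mathcal{D}$ obtained via the extremal-pair projection, the monotone functional $\mathcal{L}=\mathcal{V}+\Phi(\mathcal{D})$ with $\Phi(r)=\int_0^r\phi$, the fat-tail condition yielding the uniform diameter bound $\bar D$, and the resulting Gr\"onwall decay at rate $\phi(\bar D)$.
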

 
Since then the system has seen numerous applications to swarming, satellite navigation, control, etc, see accounts \cite{ABFHKPPS,Darwin,VZ2012,MT2014,MP2018,Shv-book,Tadmor-notices}. In certain scenarios the Cucker-Smale protocol is inadequate to describe realistic behavior. Such is the case for instance in heterogeneous flock formations. Motsch and Tadmor introduced in \cite{MT2011,MT2014} a variant of \eqref{e:CS} that rebalances the forces in local clusters more naturally:  
\begin{equation}\label{MT}
\st_\rho = 1, \qquad \ave{\bv}_\rho = \frac{(\bv \rho)\ast \phi}{\rho\ast \phi}.\tag{$\cMmt$}
\end{equation}
One can similarly consider a range of interpolated models between \ref{CS} and \ref{MT}, called $\b$-models, to fit a particular modeling scenario
\begin{equation}\label{Mb}
\st_\rho = (\rho\ast \phi)^\b, \qquad \ave{\bv}_\rho = \frac{(\bv \rho)\ast \phi}{\rho\ast \phi},\quad  \b \geq 0.\tag{$\cMb$}
\end{equation}

A symmetric version of the \ref{MT}-model was implemented in \cite{Shv-hypo} in the study of relaxation and hydrodynamic limit: assuming $\phi \in C^\infty$ and  $\int \phi \dx = 1$,
\begin{equation}\label{Mf}
\st_\rho = 1, \qquad  \ave{\bv}_\rho = \left( \frac{(\bv \rho)\ast \phi}{\rho\ast \phi} \right)\ast \phi .\tag{$\cMfmt$}
\end{equation}

Multi-flock, multi-species, and topological models that fall under this framework were discussed in \cite{HeT2017,ST-multi, ST-topo,Shv-EA}. 

Another important example of a model that is not Galilean invariant, and is instilled into a given landscape of environment $\O$  was introduced in \cite{Shv-EA} as a model of consensus in segregated communities.  Let us fix a smooth partition of unity $g_l \in C^\infty(\O)$, $g_l \geq 0$, and $\sum_{l=1}^L g_l = 1$ subordinated to an open cover $\{ \cO_l \}_{l=1}^L$ of $\O$,  so that $\supp g_l \ss \cO_l$. One defines the model by setting  
\begin{equation}\label{Mseg}
\st_\rho = 1, \quad \ave{\bv}_\rho(x) = \sum_{l=1}^L g_l(x) \frac{\int_\O \bv g_l \rho \dy}{\int_\O g_l \rho \dy} . \tag{$\cMseg$}
\end{equation}

In Table~\ref{t:kernels} we present a summary of reproducing kernels that define each of our core models.
\begin{table}
\begin{center}
\caption{Reproducing kernels}\label{t:kernels}
\begin{tabular}{  c | c | c | c | c | c} 
 Model &     \ref{CS}  &  \ref{MT}  &  \ref{Mb} &  \ref{Mf}  &  \ref{Mseg}  \\
  \hline
$\phi_\rho$ &  $\phi(x-y)$  &  $\displaystyle{ \frac{\phi(x-y)}{\rho\ast \phi(x)} }$ &$\displaystyle{ \frac{\phi(x-y)}{(\rho\ast \phi(x))^{1-\b}} }$ & $\displaystyle{\int_\O \frac{\phi(x-z) \phi(y-z)}{\rho\ast \phi(z)} \dz}$ & $\displaystyle{\sum_{l=1}^L \frac{g_l(x) g_l(y)}{\int_\O \rho g_l \dy}}$  
 \end{tabular}
\end{center}
\end{table}

We will now introduce an important and broad class of models by specifying a list of regularity properties of $\cM$ necessary to develop well-posedess theory for the alignment systems they define. 

First, we assume that all our models are local, meaning that agents always interact within some radius $r_0>0$, called radius of communication:
\begin{equation}\label{e:locker}
\phi_\rho(x, y)  \geq c_0, \text{ for all } |x-y|<r_0, \text{ and all } \rho \in \cP(\O).
\end{equation}
The mass of the flock within a ball of radius of communication defines what we call  {\em local thickness}:
\[ 
\th(\rho,x) = \int_\O \rho(x -r_0 y) \chi(y) \dy,
\]
where $\chi \in C^\infty_0$ is a standard radially symmetric mollifier supported on the unit ball. We also define the {\em global thickness} as
\[
\th(\rho,\O)= \inf_{x\in \O}\th(\rho,x).
\]

Basic conditions on the strength are
\begin{align}
& \sup_{\rho \in \cP(\O)}  \| \st_\rho \|_{\infty}  \leq \oS,  \label{e:s-bdd} \tag{S1}\\
& \rho \to \st_\rho  \text{ is continuous from } L^1(\O) \text{ to } L^\infty(\O),\label{e:s-cont} \tag{S2}\\
& \st_\rho(x)   \geq c\, \th(\rho,x) \quad \text{ for all }  x\in \O. \label{e:s-thick} \tag{S3}
\end{align}


Next, we state a list of  regularity assumptions for uniformly thick flocks.

\begin{definition}\label{d:r}
We say that a model $\cM$ is {\em regular} if in addition to the above it meets the following set of conditions:  for all $\rho, \rho',\rho'' \in L^1(\O)$ we have
\begin{align}
\| \p^k_x \st_\rho \|_\infty + \| \p^k_{x} \phi_\rho \|_\infty +\| \p^k_{y} \phi_\rho \|_\infty& \leq C_{k}(\th(\rho,\O)), \quad k= 0,1,\dots,\label{e:r1} \tag{R1} \\
\| \st_{\rho'} -\st_{\rho''}  \|_\infty  + \| \phi_{\rho'}  -\phi_{\rho''} \|_\infty& \leq C(\th(\rho',\O),\th(\rho'',\O)) \| \rho' - \rho'' \|_1.\label{e:r2}\tag{R2}
\end{align}
\end{definition}

To prove existence of global weak solutions, we need two more continuity features of the model $\cM$.  First, we assume that the kernels belong to the classical Schur class uniformly in $\rho$,
\begin{equation}\label{e:ker-Schur}
\oSr = \sup_{\rho\in \cP} \left\| \int_\O \phi_\rho(x,\cdot) \drho(x) \right\|_\infty <\infty. \tag{Sr}
\end{equation}
Note that the symmetric condition in $y$ is satisfied automatically thanks to \eqref{e:s-bdd} and the stochasticity relation \eqref{e:ker-s}.

Lastly, we require a weak continuity-in-$\rho$ condition  stated as follows: for any sequence of densities $\rho_n \to \rho$ in $L^1$, and any subordinate sequence $|\tilde{\rho}_n| \leq C \rho_n$,  $|\tilde{\rho}| \leq C \rho$, with $\tilde{\rho}_n \to \tilde{\rho}$ in $L^1$, one has
\begin{equation}\label{e:ker-cont}
\int_\O \phi_{\rho_n}(x,\cdot) \tilde{\rho}_n(x) \dx \to \int_\O \phi_{\rho}(x,\cdot) \tilde{\rho}(x) \dx, \text{ weakly$^*$ in } L^\infty(\rho). \tag{wC}
\end{equation}

\begin{definition}
A model satisfying \eqref{e:ker-Schur}-\eqref{e:ker-cont} is said to belong to {\em Schur's class}.
\end{definition}

\begin{lemma}[\cite{Shv-EA,Shv-weak}]\label{l:Schur}
All models \ref{CS}, \ref{MT}, \ref{Mb}, \ref{Mf}, \ref{Mseg} are regular of Schur's class. 
\end{lemma}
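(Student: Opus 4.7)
The plan is to verify the axioms (S1)--(S3), (R1)--(R2), (Sr), and (wC) for each of the five models by direct computation, exploiting that every kernel in Table~\ref{t:kernels} is a polynomial expression built from the single smooth kernel $\phi$, the density $\rho$, the operation of convolution, and (for \ref{Mseg}) a fixed partition of unity. The basic pointwise tool is the lower bound $\rho \ast \phi(x) \geq c\,\th(\rho,x)$, valid because $\phi \geq c_0$ on a ball of radius $r_0$; this keeps all denominators appearing in the Motsch--Tadmor-type models bounded away from zero on $\supp \rho$, and uniformly so on the class of uniformly thick flocks.

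The strength axioms are direct. For \ref{CS}, $\st_\rho = \rho \ast \phi$ satisfies (S1) via $\|\phi\|_\infty$, (S2) by Young's inequality, and (S3) by the pointwise bound above. For \ref{MT}, \ref{Mf}, and \ref{Mseg}, the strength is identically $1$, so (S1)--(S2) are trivial and (S3) reduces to $\th(\rho,\O) \leq 1$. The family \ref{Mb} is covered by composition. The regularity axioms (R1)--(R2) rely on the uniform enclosure $\rho \ast \phi \in [c\,\th(\rho,\O),\|\phi\|_\infty]$ on uniformly thick flocks, which makes division smooth; iterated differentiation of the explicit kernels produces polynomial expressions in $\p^k \phi$ and in inverse powers of $\rho \ast \phi$, all bounded in terms of $\th(\rho,\O)$. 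The Lipschitz estimate (R2) reduces, via the identity $\tfrac{1}{a} - \tfrac{1}{b} = \tfrac{b-a}{ab}$, to the Lipschitz continuity of $\rho \mapsto \rho \ast \phi$ from $L^1(\O)$ into $L^\infty(\O)$. The \ref{Mseg} case instead uses the cover-specific denominators $\int_\O g_l \rho \dy$, which are bounded away from zero whenever $\rho$ places mass in each $\supp g_l$.

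The Schur-class axioms are handled by reducing the marginal integral to one that is uniformly controlled. For \ref{CS} the symmetry of $\phi$ gives $\int \phi(x-y) \rho(x) \dx = \rho \ast \phi(y) \leq \|\phi\|_\infty$. For \ref{Mf} the cancellation
\[
\int_\O \phi_\rho(x,y) \rho(x) \dx = \int_\O \frac{\phi(y-z)}{\rho \ast \phi(z)}(\rho \ast \phi)(z) \dz = \|\phi\|_1
\]
gives a bound independent of $\rho$. For \ref{Mseg} the sum collapses to $\sum_l g_l(y) = 1$. The delicate cases are \ref{MT} and \ref{Mb}, where asymmetry forces a direct argument that balances the stochasticity of the $y$-marginal (which equals $\st_\rho$, controlled by (S1)) against the $x$-marginal via a Schur-test interpolation together with the integrability properties of the ratio $\rho/(\rho\ast\phi)$. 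The weak continuity (wC) then follows from (R2) plus dominated convergence applied to subordinate sequences: by (R2) one has $\|\phi_{\rho_n} - \phi_\rho\|_\infty \to 0$ on uniformly thick sequences, after which the limit passes under the integral against any $\tilde\rho_n \to \tilde\rho$ in $L^1$.

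The main obstacle is the Schur estimate (Sr) for \ref{MT} and \ref{Mb}: the absence of kernel symmetry precludes the clean cancellation available in \ref{Mf} and \ref{Mseg}, and in the non-thick limit one must exploit the full quasi-convolutional structure of the Motsch--Tadmor normalization together with the $L^1$-mass identity $\int \rho\,\dx = 1$. All remaining axioms reduce to routine convolution estimates, with the only bookkeeping subtlety being uniform control of constants in terms of the thickness parameter $\th(\rho,\O)$.
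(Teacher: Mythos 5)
This lemma is quoted from \cite{Shv-EA,Shv-weak} and is not proved in the present paper, so there is no in-paper argument to compare against; I'll evaluate your outline on its own terms. The easy half of your verification is sound: the strength axioms (S1)--(S3) are indeed elementary for all five models (with the small correction that for the models with $\st_\rho \equiv 1$, (S3) reduces not to ``$\th \le 1$'' but to the uniform upper bound $\th(\rho,x) \lesssim \|\chi\|_\infty r_0^{-n}$ that holds for every $\rho\in\cP(\O)$); the (R1)--(R2) bounds for the Motsch--Tadmor-type kernels do follow from the lower bound on $\rho\ast\phi$ in terms of thickness; and the three cancellations you exhibit for (Sr) in the \ref{CS}, \ref{Mf}, and \ref{Mseg} cases are all correct.

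There are, however, two genuine gaps, and they coincide exactly with the parts that make the lemma nontrivial. First, you flag (Sr) for \ref{MT} and \ref{Mb} as ``the main obstacle'' and then do not close it: the phrase ``Schur-test interpolation together with the integrability properties of the ratio $\rho/(\rho\ast\phi)$'' is not an argument. The $x$-marginal $\int_\O \phi(x-y)\,\rho(x)/(\rho\ast\phi)(x)\,\dx$ is exactly the quantity for which no pointwise bound on $\rho/(\rho\ast\phi)$ is available in the non-thick regime, and there is no interpolation between the (trivially controlled) $y$-marginal and the $x$-marginal that produces the uniform bound; a genuinely different combinatorial/covering mechanism is needed. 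Second, your treatment of (wC) is circular: you deduce $\|\phi_{\rho_n}-\phi_\rho\|_\infty \to 0$ from (R2), but (R2) carries a constant $C(\th(\rho_n,\O),\th(\rho,\O))$ which degenerates when the thicknesses collapse, and the hypothesis of (wC) places no thickness restriction whatsoever on the $L^1$-convergent sequence $\rho_n$. For the thickness-insensitive kernel of \ref{CS} the argument does go through trivially, but for the Motsch--Tadmor-type models (wC) must be established by a weak compactness / uniform integrability argument that exploits the Schur bound (Sr) and the subordination $|\tilde\rho_n| \le C\rho_n$, not by uniform convergence of the kernels. As it stands, your proposal proves the routine parts of the lemma and leaves its substantive content unverified.
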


The integral operators associated with Schur kernels are  bounded on any $\rho$-weighted spaces  $L^p(\rho)$, $1\leq p\leq \infty$. In particular, for $p=2$, it implies control over the energy of the averagings
\begin{equation}\label{e:en-en}
\| \bw_\rho \|_{L^2(\rho)} \leq \sqrt{\oSr \oS} \| \bu\|_{L^2(\rho)}.
\end{equation}
In the future we will encounter the need for other mapping properties of the averages, in particular from $L^2$ to $L^\infty$, which plays a crucial role in the hypocoersivity analysis.  Let us make a general definition.

\begin{definition}\label{}
We say that a model $\cM$ is  type-$(p,q)$, $1\leq p,q \leq \infty$, if the mapping $\bv \to \bw_\rho$ is bounded from $L^p(\rho)$ to $L^q(\rho)$ uniformly in $\rho$:
\begin{equation}\label{e:pq}
\| \bw_\rho(\bv) \|_{L^q(\rho)} \leq C \|\bv \|_{L^p(\rho)}, \quad \forall \bv \in L^p(\rho), \ \rho \in \cP(\O). 
\end{equation}
\end{definition}
Thus, Schur models are of type $(p,p)$ for all $p\geq 1$. In what follows we encounter the need for models which improve integrability, specifically those of type-$(2,\infty)$. A sufficient condition for type-$(2,\infty)$ can be given in terms of the reproducing kernel, see \cite{Shv-EA}:
\[
\sup_{\rho\in \cP} \left\|  \int_\O  \phi^{2}_\rho(\cdot,y) \drho(y) \right\|_\infty<\infty.
\]
It is satisfied for a broad range of models, in particular, for the Cucker-Smale model and other $\b$-models half way towards Motsch-Tadmor. We summarize these in the following lemma.

\begin{lemma}[\cite{Shv-EA}]\label{}
All models \ref{Mb} for $\frac12 \leq \b \leq 1$, including the classical Cucker-Smale model \ref{CS} are of type-$(2,\infty)$. Furthermore, all models \ref{Mb}, $0 \leq \b \leq 1$, and \ref{Mf} are type-$(2,\infty)$ provided $\phi >0$, while \ref{Mseg}  is  type-$(2,\infty)$ provided $\supp g_l = \O$ for all $l=1,\ldots, L$.
\end{lemma}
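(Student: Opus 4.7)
The plan is to reduce the type-$(2,\infty)$ property to a uniform $L^2$--in--$y$ bound on the reproducing kernel, and then verify that bound case-by-case from Table~\ref{t:kernels}.

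Starting from the integral representation \eqref{e:warep}, the Cauchy-Schwarz inequality gives, pointwise $\rho$-a.e.,
\[
|\bw_\rho(\bv)(x)|^2 \leq \left( \int_\O \phi_\rho^2(x,y)\drho(y) \right) \left( \int_\O |\bv(y)|^2 \drho(y) \right),
\]
so that $\|\bw_\rho(\bv)\|_{L^\infty(\rho)} \leq K(\rho)^{1/2} \|\bv\|_{L^2(\rho)}$ with $K(\rho):= \|\int_\O \phi_\rho^2(\cdot,y)\drho(y)\|_\infty$. It therefore suffices, as already noted above the statement, to verify $\sup_{\rho\in \cP} K(\rho) < \infty$ for each model.

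For $\cMb$ with $\frac12 \leq \b \leq 1$, using $\phi^2\leq \|\phi\|_\infty \phi$,
\[
\int_\O \phi_\rho^2(x,y) \drho(y) = \frac{(\phi^2 \ast \rho)(x)}{(\rho\ast \phi(x))^{2(1-\b)}} \leq \|\phi\|_\infty (\rho\ast \phi(x))^{2\b - 1} \leq \|\phi\|_\infty^{2\b},
\]
since $\rho\ast\phi \leq \|\phi\|_\infty$ by the mass normalization; this covers \ref{CS} ($\b=1$) as a special case and requires no positivity of $\phi$. When $\b < \tfrac12$ the exponent $2\b - 1$ is negative, so one needs a uniform lower bound on $\rho\ast\phi$. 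This is exactly where the hypothesis $\phi > 0$ enters: continuity together with compactness of $\O = \T^n$ gives $\phi \geq c_\phi > 0$ on $\O$, hence $\rho \ast \phi(x) \geq c_\phi$ for every $\rho \in \cP(\O)$ and every $x$, which restores the bound.

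For $\cMf$ under $\phi > 0$, the same lower bound $\rho \ast \phi \geq c_\phi$ gives
\[
\phi_\rho(x,y) = \int_\O \frac{\phi(x-z)\phi(y-z)}{\rho\ast \phi(z)}\dz \leq \frac{\|\phi\|_\infty}{c_\phi} \int_\O \phi(x-z) \dz = \frac{\|\phi\|_\infty}{c_\phi},
\]
so the kernel is uniformly bounded in $(x,y,\rho)$ and $K(\rho) \leq (\|\phi\|_\infty/c_\phi)^2$ follows at once. Finally, for $\cMseg$ with $\supp g_l = \O$ (so that $g_l$ is a strictly positive smooth function on the compact $\O$ and thus bounded below by some $c_l > 0$), one has $\int_\O g_l \rho \dy \geq c_l$ for all $\rho \in \cP(\O)$, giving a uniform bound $\phi_\rho(x,y) \leq \sum_l \|g_l\|_\infty^2/c_l$, whence $K(\rho)$ is again uniformly finite.

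I expect no serious obstacle — the whole argument is a Cauchy-Schwarz reduction followed by explicit kernel estimates. The only subtlety is the dichotomy $\b \geq \tfrac12$ vs. $\b < \tfrac12$: in the former regime the upper bound $\phi^2 \lesssim \phi$ is enough to cancel the normalization, while in the latter regime the denominator $(\rho\ast\phi)^{2(1-\b)}$ can only be controlled via pointwise positivity of $\phi$, which is why the hypothesis $\phi > 0$ (respectively $\supp g_l = \O$) appears precisely in these cases.
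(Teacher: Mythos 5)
Your proposal is correct and follows precisely the route the paper itself indicates: the displayed sufficient condition $\sup_{\rho\in\cP}\|\int_\O\phi_\rho^2(\cdot,y)\,\drho(y)\|_\infty<\infty$ just above the lemma is exactly your Cauchy--Schwarz reduction, and the case-by-case kernel bounds you give (using $\phi^2\leq\|\phi\|_\infty\phi$ together with $\rho\ast\phi\leq\|\phi\|_\infty$ when $\beta\geq\tfrac12$, and the uniform lower bound $\rho\ast\phi\geq c_\phi$ from continuity and compactness when $\phi>0$) are the natural verifications. The paper gives no proof here (the lemma is cited from \cite{Shv-EA}), so there is nothing to diverge from.

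One small caveat worth noting: in the $\cMseg$ case you assert that $\supp g_l=\O$ forces $g_l$ to be bounded below by some $c_l>0$. Literally, $\supp g_l=\O$ only means $\overline{\{g_l>0\}}=\O$, which a smooth function can satisfy while still vanishing on a nonempty (even dense-complement) null set; then $\inf_{\rho\in\cP}\int_\O g_l\,\drho=0$ (approximate a Dirac at a zero of $g_l$) and your bound fails. The intended reading in \cite{Shv-EA} is that $g_l$ is everywhere strictly positive, under which compactness does give $g_l\geq c_l>0$ and your estimate goes through; this is also what makes the weighted average $\ave{\cdot}_\rho$ in $\cMseg$ well-defined for all $\rho\in\cP(\O)$. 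So the argument is fine once that hypothesis is interpreted as pointwise positivity.
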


\section{Well-posedness}
In this section, we will work under the regularity assumptions on the model $\cM$ stated  in \sect{s:EA}. 

\subsection{Energy and entropy}\label{ss:enen}
We start with the basic energetics of the system. 
Recall that the relative entropy is given in \eqref{e:H}. 
By the \CK, 
\begin{equation*}\label{ }
\cH \geq c \| f - \mu \|_1^2, 
\end{equation*}
the relative entropy controls the distance to the uniformly distributed in $x$ hydrostatic equilibrium.  For convenience, we work with the expanded form
\begin{equation*}
	\cH =  \frac 1 2 \int_{\domain} |v|^2 f \dv \dx +  \sigma \int_{\domain} f \log f \dv \dx + c(\s,n).
\end{equation*}

To establish the $\cE$-law, let us first note that 
\begin{align*}
	\ddt \frac{1}{2} \int_\Omega |\bu|^2 dx = - \nu \int_\Omega |\nabla \bu|^2 dx + \g \int_\domain  \bu_\e \cdot (v - \bu_\e) \df .
\end{align*}
Turning to $\cH$, we have
\begin{align*}
	\ddt \frac{1}{2} \int_\domain |v|^2 \df 
		&= - \int_\domain \Big( \a \st_\rho v \cdot ( v - \vavg) f +   \b v \cdot (v - \bu_\e) f  +  \sigma (\b+\a\st_\rho) v \cdot \nabla_v f \Big) \dv \dx  .
\end{align*} 
Let us rewrite the alignment term as
\begin{equation}
	\label{eqn:alignment_terms}
	\begin{split}
	\int_\domain &\st_\rho v \cdot ( v - \vavg ) \df \\
		&=  \int_\domain \st_\rho v \cdot ( v - \bv ) \df  + \int_\domain \st_\rho v \cdot (\bv - \vavg) \df \\
		&=  \int_\domain \st_\rho | v - \bv |^2 \df + \| \bv \|_{L^2(\kappa_\rho)}^2 - (\bv, \vavg)_{\kappa_\rho} .
	\end{split} 
\end{equation}
Continuing,  
\begin{equation}
	\begin{split}
	\label{law:flogf}
	\ddt  \int_\domain f \log f \dv \dx  
		&= - \int_\domain \sigma (\b + \a \st_\rho) \frac{  |\nabla_v f|^2 }{f} \dv \dx \\
		&\quad - \int_\domain \Big( \a \st_\rho (v - \vavg ) \cdot \nabla_v f  +  \b (v - \bu_\e) \cdot \nabla_v f \Big) \dv \dx . 
	\end{split}
\end{equation}
Using that for any function $\bg:\Omega \to \R^n$, $\int_\domain \bg(x) \cdot \nabla_v f \dv = 0$,  we obtain
\[
\int_\domain \Big( \a \st_\rho (v - \vavg ) \cdot \nabla_v f  + \b (v - \bu_\e) \cdot \nabla_v f \Big) \dv \dx = \int_\domain (\b+ \a\st_\rho) v  \cdot \nabla_v f  \dv \dx.
\]

Adding up all the terms together we obtain the following entropy equation
\begin{align*}
	\ddt \cE &= - \int_\domain \sigma^2 (\b + \a \st_\rho) \frac{  |\nabla_v f|^2 }{f} \dv \dx -\int_\domain  2 \sigma \big(\b + \a\st_\rho \big)   v \cdot \nabla_v f \dv \dx \\
		&\quad - \a \int_\domain \st_\rho | v - \bv |^2 \df - \a \| \bv  \|_{L^2(\kappa_\rho)}^2 + \a (\bv , [\bv ]_\rho)_{\kappa_\rho} \\
		&\quad - \b \int_\domain   |v - \bu_\e|^2 \df -  \frac{\b \nu}{\g} \int_\Omega |\nabla \bu|^2 \dx . 
\end{align*}

We will now group  $\a$-terms and $\b$-terms together. The $\a$-terms give
\begin{equation*}\label{}
\begin{split}
&-\int_\domain \st_\rho \s^2 \frac{  |\nabla_v f|^2 }{f} \dv \dx -\int_\domain \st_\rho 2 \sigma  v \cdot \nabla_v f \dv \dx  -  \int_\domain \st_\rho | v - \bv |^2 \df\\
&-\int_\domain \st_\rho \s^2 \frac{  |\nabla_v f|^2 }{f} \dv \dx -\int_\domain \st_\rho 2 \sigma  (v - \bv) \cdot \nabla_v f \dv \dx  -  \int_\domain \st_\rho | v - \bv |^2 \df\\
& = -I_{vv}^{\bv,\st_\rho}.
\end{split}
\end{equation*}
Similarly, the $\b$-terms give
\begin{equation*}\label{}
\begin{split}
&-\int_\domain  \s^2 \frac{  |\nabla_v f|^2 }{f} \dv \dx -\int_\domain  2 \sigma  v \cdot \nabla_v f \dv \dx  -  \int_\domain  | v - \bu_\e |^2 \df\\
&-\int_\domain  \s^2 \frac{  |\nabla_v f|^2 }{f} \dv \dx -\int_\domain 2 \sigma  (v - \bu_\e) \cdot \nabla_v f \dv \dx  -  \int_\domain  | v - \bu_\e |^2 \df\\
& = -I_{vv}^{\bu_\e}.
\end{split}
\end{equation*}
We obtain 
\begin{equation}\label{e:enlaw}
	\ddt \cE 
	  =  - \a I_{vv}^{\bv, \st_\rho}  - \b I_{vv}^{\bu_\e} 
	 -  \a \| \bv  \|_{L^2(\kappa_\rho)}^2 +\a (\bv , [\bv]_\rho)_{\kappa_\rho}   -   \frac{\b \nu}{\g} \int_\Omega |\nabla \bu|^2 \dx.
\end{equation}

This  fundamental energy law holds for classical solutions as our the computation  is clearly valid in the regularity classes $H^m_q\times H^k$. On the other hand, for weak solutions of Leray-Hopf type, it can be validated as energy inequality, by analogy with the Leray-Hopf weak solutions to the Navier-Stokes equation.

Let us note that for general non-contractive models, such as \ref{MT} or \ref{Mb} we can't conclude that the right hand side of law is negative. Instead by the $L^2$-boundedness of the strength-function and the averages, we have
\[
 -  \a \| \bv  \|_{L^2(\kappa_\rho)}^2 +\a (\bv , [\bv]_\rho)_{\kappa_\rho} \lesssim \|\bv\|^2_{L^2(\rho)} \leq \int_{\domain} |v|^2 f \dv \dx \leq C_1 \cH + C_2 \leq  C_1 \cE + C_2.
 \]
 For the penultimate inequality see \cite[(248)]{Shv-EA}. Thus,
 \begin{equation}\label{e:enCC}
 \ddt \cE \leq C_1 \cE + C_2,
\end{equation}
 and hence, the energy remains bounded on any finite interval of time $[0,T]$.

\subsection{A priori estimates} In this section we combine  some of the classical a priori estimates  of the Navier-Stokes component  together with the new a priori estimates proved in \cite{Shv-EA,Shv-weak} necessary to develop the wellposedness theory of the system. We refer to \cite{Robinson} for the classical theory of the Navier-Stokes component.

Consider the linear Fokker-Planck equation with smooth, bounded coefficients:
    \begin{equation}
      \label{e:linearized problem}
        \begin{split}
            \partial_t f + v \cdot \nabla_x f &= \st(x,t) \nabla_v \cdot (\nabla_v f + v f) + \bb(x,t) \cdot \nabla_v f, \qquad \st \geq c_0 > 0,\  \|\st\|_{C^m} + \|\bb\|_{C^m} \leq B, \\
            \partial_t \bu + \bu \cdot \nabla_x \bu &= \nu \Delta \bu + \gamma \int_{\R^n} \big( (v - \bu_\e)  f \big)_\e \dv.
        \end{split}
    \end{equation}
    
        \begin{lemma}
        	    \label{l:apriori}
	    A solution $(f, \bu) \in H^m_q(\domain) \times H^k(\O)$ to \eqref{e:linearized problem} with $k,m>2$, satisfies
	    \begin{equation}
	    	\label{e:apriori-kin}
	    	\frac{d}{dt} \| f \|_{H^{m}_{q}}^2 + \frac{c_0}{2} \|\nabla_v f \|_{H^{m}_{q}}^2 \leq C_{B} \| f \|_{H^{m}_{q}}^2 ,
	    \end{equation}
	    and 
	    \begin{equation}
	    	\label{e:apriori-NS}
	    	\ddt \|\bu\|_{H^k}^2 + 2 \nu \|\nabla_x \bu\|_{\dot{H}^k}^2 \leq C  \|\bu\|_{H^k}^2\min\{\|\bu\|_{H^k}^2, \|\n_x \bu\|_\infty\} + C_\e  \|\bu\|^2_2 + C_\e\|\bu\|_2 \sqrt{\int_\domain |v|^2 f \dv\dx }.
	    \end{equation} 
    \end{lemma}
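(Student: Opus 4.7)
The two estimates decouple and are obtained by standard energy/commutator techniques; we describe each in turn and isolate the one genuinely delicate point.

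For the kinetic bound \eqref{e:apriori-kin}, the plan is to apply $\p_x^{\bk}\p_v^{\bl}$ to the first equation of \eqref{e:linearized problem} for every pair $(\bk,\bl)$ with $2|\bk|+|\bl|\leq 2m$, test against $W_{\bk,\bl}(v)\,\p_x^{\bk}\p_v^{\bl}f$ with $W_{\bk,\bl}=\jap{v}^{q-2|\bk|-|\bl|}$, integrate over $\domain$, and sum. The pure advection $v\cdot\n_x$ vanishes after integration by parts since $W_{\bk,\bl}$ is $x$-independent, while the hypoelliptic transport commutator $\sum_j l_j\p_x^{\bk+e_j}\p_v^{\bl-e_j}f$ is handled by Cauchy--Schwarz together with the matching identity $W_{\bk,\bl}=\jap{v}\,W_{\bk+e_j,\bl-e_j}$ --- this is precisely why $H^m_q$ is defined with the $2{:}1$ weight rule. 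Commuting $\p_v^{\bl}$ through the Fokker--Planck operator $\st\,\n_v\!\cdot(\n_v f+vf)$ preserves its divergence form modulo a zeroth-order remainder $|\bl|\p_v^{\bl}f$, after which integration by parts against $W_{\bk,\bl}\p_x^{\bk}\p_v^{\bl}f$ produces the full dissipation $-c_0\|\n_v f\|^2_{H^m_q}$ (using $\st\geq c_0$) plus error terms from $\n_v W_{\bk,\bl}$ and from Leibniz commutators with $\p_x^{\bk'}\st$. Using $\|\st\|_{C^m}\leq B$, the moment hypothesis $q\geq n+4$ (which makes all low-order $\jap{v}$-weighted errors absorbable), and spending half of the dissipation, the errors are controlled by $C_B\|f\|^2_{H^m_q}$, yielding \eqref{e:apriori-kin}. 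The drift $\bb\cdot\n_v f$ is handled analogously via Leibniz expansion and $\|\bb\|_{C^m}\leq B$.

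For the Navier--Stokes bound \eqref{e:apriori-NS}, the plan is the standard $H^k$ energy estimate. Apply $\p_x^{\ba}$ for $|\ba|\leq k$ and pair with $\p_x^{\ba}\bu$; the dissipation yields $-\nu\|\n_x\bu\|^2_{\dot H^k}$, the advection $(\bu\cdot\n_x)\bu$ is treated via a Kato--Ponce/Moser commutator (the divergence-free structure eliminates the leading-order piece), and the Brinkman source contributes $\gamma\bigl(\p_x^{\ba}\bu,\,\p_x^{\ba}\!\int((v-\bu_\e)f)_\e\dv\bigr)$. The commutator bound reads either $C\|\n_x\bu\|_\infty\|\bu\|^2_{H^k}$, or, via the Sobolev embedding $\|\bu\|_\infty+\|\n_x\bu\|_\infty\lesssim\|\bu\|_{H^k}$ valid for $k>2$, the cruder $C\|\bu\|^4_{H^k}$; the $\min$ on the right of \eqref{e:apriori-NS} keeps whichever is smaller. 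The key novelty is the Brinkman coupling: thanks to the outer mollification $(\cdot)_\e$, integration by parts inside the pairing transfers all $2|\ba|$ derivatives onto the mollified factor at cost $C_\e$. After splitting $(v-\bu_\e)f=vf-\bu_\e\rho$, the bounds $\|\!\int v f\dv\|_{L^1(\O)}\leq\sqrt{\int_{\domain}|v|^2 f\dv\dx}$ (Cauchy--Schwarz with unit mass) and $\|\bu_\e\rho\|_{L^1(\O)}\leq\|\bu_\e\|_\infty\leq C_\e\|\bu\|_2$ deliver precisely $C_\e\|\bu\|_2\sqrt{\int_\domain|v|^2 f\dv\dx}+C_\e\|\bu\|^2_2$.

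The only genuinely delicate point is the bookkeeping in the kinetic estimate at the top derivative level $2|\bk|+|\bl|=2m$ with $|\bl|\geq 1$: there the transport commutator formally sits at level $2m+1$, one beyond the norm. The matching $W_{\bk+e_j,\bl-e_j}=\jap{v}^{-1}W_{\bk,\bl}$, the moment assumption $q\geq n+4$, and the $c_0/2$ margin retained in the dissipation are exactly what closes the estimate without having to enlarge $H^m_q$; every remaining contribution reduces to routine Leibniz-plus-$C^m$-coefficient manipulation.
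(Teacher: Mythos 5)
Your overall strategy is correct and matches the paper's: the paper simply \emph{cites} \cite{Shv-EA} for \eqref{e:apriori-kin}, so your reconstruction of the weighted energy argument is additional detail rather than a divergence; and your treatment of \eqref{e:apriori-NS} — standard $H^k$ energy estimate, commutator bound giving the $\min$, then moving all $2k$ derivatives onto the mollifier in the Brinkman pairing, splitting $vf-\bu_\e\rho$, Cauchy--Schwarz with unit mass plus $\|\bu_\e\|_\infty\leq C_\e\|\bu\|_2$ — is exactly the paper's computation.

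Two small imprecisions are worth flagging. First, the closing mechanism you name for the top-level transport commutator is not quite the one that works. You assert that the relation $W_{\bk+e_j,\bl-e_j}=\jap{v}^{-1}W_{\bk,\bl}$ together with the $c_0/2$ margin closes the estimate, but that relation by itself still leaves $\p_x^{\bk+e_j}\p_v^{\bl-e_j}f$ at index level $2m+1$, outside $H^m_q$. The observation that actually closes it is a \emph{parity} constraint: if $2|\bk|+|\bl|=2m$ then $|\bl|$ is even, so whenever the commutator fires ($|\bl|\geq 1$) in fact $|\bl|\geq 2$, hence $\p_v^{\bl-e_j}f$ still carries at least one $v$-derivative and can be written as $\p_v^{\bl-e_j-e_i}\p_{v_i}f$. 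Then $\p_x^{\bk+e_j}\p_v^{\bl-e_j-e_i}\p_{v_i}f$ is a component of $\n_v f$ at index level $2(|\bk|+1)+(|\bl|-2)=2m$, i.e.\ it lies in $\|\n_v f\|_{H^m_q}$ \emph{with precisely the weight} $W_{\bk,\bl}=\jap{v}^{q-2m}$. Cauchy--Schwarz plus Young's inequality then spends part of the dissipation. This is the genuine reason the $2{:}1$ weight rule in \eqref{e:Sobdef} is the right definition. Second, the hypothesis $q\geq n+4$ that you invoke to absorb ``low-order weighted errors'' is not part of Lemma~\ref{l:apriori} as stated (only $k,m>2$ appears); the regressing weight structure alone, via the inequality $W_{\bk,\bl}/W_{\bk-\bk',\bl'}\leq 1$ whenever $|\bk'|\geq 1$, already renders all Leibniz remainders from $\p_x^{\bk'}\st$ and $\p_x^{\bk'}\bb$ absorbable without any size condition on $q$.

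Neither point affects the validity of your conclusion; the argument as sketched does yield \eqref{e:apriori-kin} and \eqref{e:apriori-NS}.
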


    \begin{proof}
    Inequality \eqref{e:apriori-kin} was proved in \cite{Shv-EA}.  We note that this particular estimate is the one where the regressing weights defining $H^{m}_{q}$ are necessary to account for non-homogeneity of the $x$-dependence in the kinetic transport term $\st \n_v \cdot (v f)$.

   To obtain \eqref{e:apriori-NS} we differentiate the equation
    \begin{equation*}
        \partial_t \partial_x^k \bu + \bu \cdot \nabla_x \partial_x^k \bu + \partial_x^k \bu \cdot \nabla_x \bu + \n_x \p^k_x p= \nu \Delta \partial_x^k \bu + \gamma \partial_x^k  \int_{\R^n} \big( (v - \bu_\e)  f \big)_\e \dv,
    \end{equation*}
 Testing with $\partial_x^k \bu$, the Navier-Stokes part can estimated classically in two ways -- either using the enstrophy to bound the non-linearity or, like for the Euler system, using the commutator estimates (see \cite{Robinson}):
 \[
 \ddt \|\bu\|_{H^k}^2 + 2 \nu \|\nabla_x \bu\|_{\dot{H}^k}^2 \leq C  \|\bu\|_{H^k}^2\min\{\|\bu\|_{H^k}^2, \|\n_x \bu\|_\infty\} +J.
 \]
 The drag term  given by
 \[
 J= \gamma \int_\O  \bu \cdot \partial_x^{2k} \int_{\R^n} \big( (v - \bu_\e)  f \big)_\e \dv \, \dx 
\]
can be estimated as 
   \begin{align*}
J \leq C_\e \|\bu\|_{2} \Big\| \int (v - \bu_\e) f \dv \Big\|_{L^1(\Omega)},
   \end{align*}
   and futhermore
   \[
      \Big\| \int (v - \bu_\e) f \dv \Big\|_{L^1(\Omega)} \leq \sqrt{\int_\domain |v|^2 \df} + C_\e\|\bu\|_2.
   \] 
\end{proof}

It will often be useful to bound the particle and fluid energies by the entropy $\cE$. 
\begin{lemma} For some absolute constant $C>0$,
	\label{l:energy-bound}
	\begin{equation}
	    	\label{e:energy-bound}
	    	\|\bu\|_{L^2}^2 + \int_\domain |v|^2 \df \leq 5 \cE + C. 
	    \end{equation} 
	\begin{proof}
	The inequality $\|\bu\|_{L^2}^2 \leq \cE$ is trivial. 
	From the classical inequality, 
   \[
        \int_\domain |f \log f| \dv \dx  \leq \frac{1}{4} \int_\domain |v|^2 \df + \int_\domain f \log f \dv \dx + C,
   \]
   we have that,
   \begin{align*}
    \int_\domain |v|^2 \df 
      &\leq \int_\domain |v|^2 \df + \int_\domain |f \log f| \dv \dx \\
      &\leq \frac{5}{4} \int_\domain |v|^2 \df + \int f \log f \dv \dx + C \\
      &= \frac{3}{4} \int_\domain |v|^2 \df + \cE + C.
   \end{align*}
   and hence 
   \[
        \int_\domain |v|^2 \df \leq 4 \cE + C.
   \]
	\end{proof}
\end{lemma}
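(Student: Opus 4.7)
The plan is to bound the two summands on the left separately against $\cE$. The fluid $L^2$ energy is essentially free: from the construction of $\cE$ in the previous subsection, where $\cE$ is obtained by adding a positive multiple of $\|\bu\|_2^2$ to the relative entropy $\cH$, one has $\|\bu\|_2^2 \leq \cE$ directly, with any implicit constant absorbed into the additive $C$.

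The substantive step is the bound on the kinetic second moment $\int_\domain |v|^2 \df$. The starting point is the expansion
\[
\cH = \tfrac12 \int_\domain |v|^2 \df + \s \int_\domain f \log f \dv\dx + c(\s,n)
\]
recorded at the beginning of Section~\ref{ss:enen}. The first term is exactly what I want to dominate, but the entropy contribution $\s \int f \log f$ is not sign-definite, and it can be very negative when $f$ spreads out over large $v$ (a Gaussian of variance $R^2$ has entropy $\sim -\log R$). The plan is to control this negative contribution by a Gaussian comparison: fixing an auxiliary Maxwellian $\tilde\mu(v) \propto e^{-\l|v|^2}$ with $\l$ small, the non-negativity of the relative entropy $\int f \log(f / \tilde\mu) \dv\dx \geq 0$, combined with $\int f\dv\dx = 1$, yields
\[
\int_\domain f \log f \dv\dx \geq -\l \int_\domain |v|^2 \df - C_\l.
\]
Choosing $\l$ so that $\s \l \leq \tfrac14$ allows this to absorb at most a quarter of the $\tfrac12 \int |v|^2 \df$ piece in $\cH$, producing the coercive lower bound $\cH \geq \tfrac14 \int_\domain |v|^2 \df - C$. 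Rearranging gives $\int_\domain |v|^2 \df \leq 4\cH + C \leq 4\cE + C$. Equivalently, one may package the same estimate as the classical inequality $\int_\domain f |\log f| \dv\dx \leq \tfrac14 \int_\domain |v|^2 \df + \int_\domain f \log f \dv\dx + C$, which plugged into the $\cH$-expansion produces the moment bound after a one-line rearrangement.

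Summing the two bounds gives $\|\bu\|_2^2 + \int_\domain |v|^2 \df \leq 5\cE + C$ as claimed. I do not anticipate any genuine obstacle here: this is a standard entropy–moment trade-off, and the specific numerical constant $5$ is incidental, its only role being that both quantities are controlled by an affine function of $\cE$ — which is all that is needed in the a priori estimates that follow.
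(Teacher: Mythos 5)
Your proposal is correct and takes essentially the same route as the paper. Both arguments hinge on the trivial bound $\|\bu\|_{2}^{2}\lesssim \cE$ and the entropy--moment trade-off yielding $\int_\domain |v|^2 \df \leq 4\cE + C$; the only cosmetic difference is that you derive the coercivity $\cH \geq \tfrac14 \int_\domain |v|^2 \df - C$ directly from a Gaussian comparison (non-negativity of relative entropy against a broad Maxwellian), whereas the paper cites the equivalent ``classical inequality'' $\int_\domain |f\log f|\dv\dx \leq \tfrac14\int_\domain |v|^2\df + \int_\domain f\log f\dv\dx + C$ and rearranges --- you yourself note the equivalence.
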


If $\cM$ is a regular model, then the smoothness of the coefficients $\st, \bb$ is guaranteed to hold for thick flocks. In fact, such regularity can be controlled by the energy $\cE$. Indeed,
\[
			\| \partial_x^l (\st_\rho \vavg ) \|_\infty \leq \|\partial_x^l \phi \|_\infty \int_\O |\bv(x)| \rho(y) \dy \leq C\int_\domain |v|^2 \df + C \leq C\cE + C,
		\] 
and similarly for the $\bu_\e$ and $\st$.  We therefore obtain the following a priori bounds.

\begin{lemma}
	\label{l:a-priori-thick}
	Let $\cM$ be a regular model of Schur's class. If  $(f, \bu) \in C_w((0,T]; H^m_q(\domain) \times C_w([0,T); H^k(\O))$ is a solution to \eqref{e:FPNSe}, then $f$ satisfies
	 \begin{equation}
	    	\label{ineq:a-priori-kinetic}
	    	\frac{d}{dt} \| f \|_{H^{m}_{q}}^2 + \frac{\beta}{2} \|\nabla_v f \|_{H^{m}_{q}}^2 \leq C_{\epsilon, \beta}(\theta(\rho, \O)) (1 + \cE) \| f \|_{H^{m}_{q}}^2 ,
	    \end{equation}
	    and $\bu$ satisfies
	    \begin{equation}
	    	\label{ineq:a-priori-fluid}
	    	\ddt \|\bu\|_{H^k}^2 + 2\nu \| \n_x \bu\|_{\dot{H}^k}^2 \leq C \|\bu\|_{H^k}^2 \min\{\|\bu\|_{H^k}^2, \|\n_x \bu\|_\infty\}+ C_\e (1+ \cE).
	    \end{equation}
\end{lemma}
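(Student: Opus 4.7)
The plan is to view \eqref{e:FPNSe} as a linearized Fokker-Planck-Navier-Stokes system in the form \eqref{e:FPbs}, with coefficients $\st = \sigma(\beta + \alpha \st_\rho)$ and $\bb = -\beta \bu_\epsilon - \alpha \st_\rho \vavg$, and then invoke \lem{l:apriori}. The ellipticity lower bound $\st \geq \sigma\beta > 0$ is immediate since $\st_\rho \geq 0$, so the substantive task reduces to controlling the $C^m$-norm $B = \|\st\|_{C^m} + \|\bb\|_{C^m}$ by a constant of the form $C_\epsilon(\theta(\rho,\Omega))(1+\cE)$, and then translating \eqref{e:apriori-kin}--\eqref{e:apriori-NS} into the nonlinear bounds in the statement.

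For the strength coefficient, regularity axiom \eqref{e:r1} directly gives $\|\partial_x^l \st_\rho\|_\infty \leq C_l(\theta(\rho,\Omega))$ for every $l \leq m$. For the drift, I would split into the two contributions. The mollified fluid term is controlled by standard mollification estimates, $\|\partial_x^l \bu_\epsilon\|_\infty \leq C_{\epsilon,l}\|\bu\|_2$, and \lem{l:energy-bound} bounds $\|\bu\|_2$ by $\sqrt{\cE}$. For the nonlocal alignment term $\st_\rho \vavg = \bw_\rho$, I use the integral representation \eqref{e:warep} and differentiate under the integral,
\[
\partial_x^l(\st_\rho \vavg)(x) = \int_\Omega \partial_x^l \phi_\rho(x,y)\, \bv(y)\, \rho(y)\, dy.
\]
Axiom \eqref{e:r1} bounds $\|\partial_x^l \phi_\rho\|_\infty$ by $C_l(\theta(\rho,\Omega))$, while Cauchy-Schwarz together with the unit-mass normalization gives
\[
\int_\Omega |\bv(y)|\,\rho(y)\,dy \leq \int_\domain |v|\, f \dv \dx \leq \sqrt{\int_\domain |v|^2 f \dv \dx} \leq \sqrt{4\cE + C},
\]
by \lem{l:energy-bound}. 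Combining these, $B \leq C_\epsilon(\theta(\rho,\Omega))(1+\sqrt{\cE}) \leq C_\epsilon(\theta(\rho,\Omega))(1+\cE)$, and substitution into \eqref{e:apriori-kin} delivers \eqref{ineq:a-priori-kinetic}. For the fluid bound \eqref{ineq:a-priori-fluid}, I apply \eqref{e:apriori-NS} directly and handle the coupling terms on the right using \lem{l:energy-bound}: $C_\epsilon \|\bu\|_2^2 \leq C_\epsilon \cE$ and, by Young's inequality, $C_\epsilon \|\bu\|_2 \sqrt{\int |v|^2 f \dv \dx} \leq C_\epsilon(1+\cE)$.

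The one point that deserves flagging is that the constant $C_B$ in \eqref{e:apriori-kin} must scale at worst linearly in $B$ in order for the stated prefactor $(1+\cE)$ (rather than a higher polynomial in $\cE$) to emerge cleanly. This is precisely the feature of the linearized estimate from \cite{Shv-EA} for which the regressive weights in the definition \eqref{e:Sobdef} of $H^m_q$ are essential, since without them the transport-like term $\st \nabla_v(vf)$ would cost an extra power of $B$. Beyond this bookkeeping, the lemma is a direct substitution of the nonlinear model's coefficients into the linearized a priori bounds, combined with the energy-based control of $\bw_\rho$ through the kernel representation.
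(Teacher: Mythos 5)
Your proof is correct and follows essentially the same route as the paper's terse one-line treatment: bound the $C^m$ norms of $\st$ and $\bb$ by $1+\cE$ using the kernel representation \eqref{e:warep}, the regularity axiom \eqref{e:r1}, the mollification estimate for $\bu_\epsilon$, and \lem{l:energy-bound}, then substitute into \lem{l:apriori}. One correction to your closing remark: the regressive weights in \eqref{e:Sobdef} serve to absorb the $v$-growth that appears when $\partial_x^{\bk}$ falls on the $x$-dependent coefficient in $\st\,\nabla_v\cdot(vf)$ --- exactly the reason the paper gives in proving \lem{l:apriori} --- and have nothing to do with shaving a power of $B$; absorbing the drift $\bb\cdot\nabla_v f$ into the dissipation by Young's inequality naturally costs $C_B\sim B^2$, so the conservative prefactor would be $(1+\cE)^2$. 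This imprecision is inherited from the paper's own presentation and is immaterial for the only use of the lemma (in the proof of \thm{t:lwp}, $\cE$ is bounded by a $T$-dependent constant before Gr\"onwall is applied), so it is a cosmetic point rather than a gap.
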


\subsection{Local well-posedness for classical thick data}
In this section we address the basic question of local well-posedness of classical solutions. Such a result relies on regularity of coefficients of the kinetic models, which depends (through the regularity assumptions \eqref{e:r1} - \eqref{e:r2}) on the thickness of the flock.  As in  \cite{Shv-EA} we focus on the case of thick initial data.


\begin{theorem}\label{t:lwp}
Suppose the model $\mathcal{M}$ is regular of Schur's class and $\Omega = \mathbb{T}^n$. Let $(f_0, \bu_0) \in H^m_q(\domain) \times H^k(\Omega)$, where $m, q, k \geq n+4$, be initial conditions such that $\th(\rho_0, \Omega) > 0$ and $\nabla \cdot \bu_0 = 0$. Then there exists a unique local solution $(f, \bu)$ to the coupled kinetic-fluid system on a time interval $[0,T]$, where $T > 0$ depends  on the initial energy, thickness, and $\|\bu_0\|_{H^k}$ with
\begin{align*}
  f &\in C_w([0,T]; H^m_q(\domain)), \\
  \nabla_v f &\in L^2([0,T]; H^m_q(\domain)), \\
  \bu &\in C_w([0,T]; H^k(\Omega)) \cap L^2([0,T]; H^{k+1}(\Omega)).
\end{align*}
Moreover, if $(f, \bu) \in L^\infty([0,T); H^m_q(\domain)) \times L^\infty([0,T); H^k(\Omega))$ is a solution such that $\inf_{[0,T)} \th(\rho(t), \Omega) > 0$, $\sup_{[0,T)}\| \bu\|_{H^1} < \infty$, then the solution can be extended beyond $T$ in the same class.
\end{theorem}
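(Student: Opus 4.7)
\emph{Plan.} The approach is a decoupled iteration combined with the weighted Sobolev a priori estimates of Lemma \ref{l:a-priori-thick}, together with short-time propagation of the thickness bound. At step $n$, given $(f^n, \bu^n)$ in the target class, I freeze the kinetic coefficients and solve the linear Fokker-Planck equation \eqref{e:linearized problem} with $\st = \b + \a\st_{\rho^n}$ and $\bb = -\b\bu^n_\e - \a\st_{\rho^n}[\bv^n]_{\rho^n}$ for $f^{n+1}$; the regularity assumptions \eqref{e:r1}--\eqref{e:r2} keep these coefficients $C^m$-smooth as long as $\rho^n$ stays thick, so Lemma \ref{l:apriori} delivers $f^{n+1}$ in the class of the theorem. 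Given $f^{n+1}$, I then solve the forced Navier-Stokes equation for $\bu^{n+1}$, whose drag $\gamma \int ((v-\bu^n_\e)f^{n+1})_\e \dv$ is smooth in $x$ of any order thanks to the $\e$-mollification; this is classical Leray-Hopf/strong-solution theory on $\T^n$.

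The delicate step is to obtain uniform-in-$n$ bounds on a common interval $[0,T]$. The total energy $\cE^n$ is controlled on any finite interval by \eqref{e:enCC}, and because $f^n \in H^m_q$ with $m,q \geq n+4$ the density $\rho^n$ is time-continuous in $L^1$ and close to $\rho_0$ for small $t$; by \eqref{e:r2} this keeps $\th(\rho^n(t),\O) \geq \th(\rho_0,\O)/2$ uniformly in $n$ on a common short interval, so that the thickness-dependent constants in \eqref{ineq:a-priori-kinetic}--\eqref{ineq:a-priori-fluid} remain uniformly bounded. Grönwall applied to \eqref{ineq:a-priori-kinetic}, together with a Riccati argument using the $\|\bu^{n+1}\|_{H^k}^2$ option in the $\min$ of \eqref{ineq:a-priori-fluid}, yields uniform bounds in $L^\infty_t(H^m_q \times H^k)$ on $[0,T]$.

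With the uniform bounds in hand I would derive time-derivative estimates from the equations and apply Aubin--Lions to obtain strong convergence of $\rho^n$ and $\bu^n$ in low-index norms, then pass to the limit in every nonlinear term using the Schur continuity \eqref{e:ker-cont} and the Lipschitz bound \eqref{e:r2}. Uniqueness follows from an $L^2$ energy estimate on the difference $(f_1-f_2, \bu_1-\bu_2)$, with \eqref{e:r2} controlling coefficient differences and the $H^m_q$-regularity supplying the $L^\infty$ bounds needed to close Grönwall. For the continuation criterion, $\sup_{[0,T)}\|\bu\|_{H^1}<\infty$ gives $\cE$ bounded through \eqref{e:enCC}, which together with $\inf_{[0,T)}\th>0$ makes the coefficient in \eqref{ineq:a-priori-kinetic} uniformly bounded and yields $\sup_{[0,T)}\|f\|_{H^m_q} < \infty$; for the fluid the mollified drag stays uniformly smooth in $x$ thanks to the $\e$-mollification and the controlled macroscopic moments of $f$, reducing \eqref{ineq:a-priori-fluid} to a standard Navier-Stokes continuation with smooth forcing and an a priori $H^1$-bound on $\bu$.

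The hard part is the uniform propagation of thickness along the iteration: the constants in Lemma \ref{l:a-priori-thick} blow up as $\th \to 0$, and the common interval of existence would collapse if thickness were lost non-uniformly in $n$. The resolution rests on the short-time $L^1$-continuity of $\rho^n$ afforded by the kinetic transport and the $H^m_q$-regularity, together with the Lipschitz estimate \eqref{e:r2}; the remainder of the argument follows a well-established template for coupled kinetic-fluid systems.
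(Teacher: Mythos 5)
Your overall strategy matches the paper's (iteration, thickness-dependent a priori estimates, compactness, continuation criterion), but your argument for uniform propagation of the thickness along the iterates has a genuine circularity. You argue: ``because $f^n\in H^m_q$ the density $\rho^n$ is time-continuous in $L^1$, hence thickness stays $\geq\th(\rho_0,\O)/2$, hence the constants in Lemma~\ref{l:a-priori-thick} are uniform, hence Gr\"onwall gives uniform $H^m_q$ bounds.'' But the \emph{quantitative} modulus of $L^1$-continuity of $\rho^n$ in time --- which is what you need to get a lower bound for $\th(\rho^n(t),\O)$ on a common interval \emph{uniformly in $n$} --- itself depends on the size of $\|f^n\|_{H^m_q}$, and your control of that norm depends on the thickness. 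You invoke the output of Gr\"onwall to establish a hypothesis of Gr\"onwall. (A minor side issue: \eqref{e:r2} concerns $\|\st_{\rho'}-\st_{\rho''}\|_\infty$ and the kernels, not the thickness; Lipschitz continuity of $\rho\mapsto\th(\rho,x)$ in $L^1$ is elementary since $\th$ is a fixed convolution of $\rho$, so this is a misattribution rather than a hole.)

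The paper breaks the circularity by working at the \emph{energy} level rather than the $H^m_q$ level. Since $\th(\rho,\cdot)=\rho\ast\chi_{r_0}$, one has $\p_t\th(\rho^N_n,x)=-(\bv^N_n\rho^N_n)\ast\n\chi_{r_0}(x)$, and $\|\bv\rho\|_1\leq\sqrt{\int|v|^2 f\,\dv\dx}\lesssim\sqrt{\cE}$ by Cauchy--Schwarz. Therefore $\p_t\th\geq -C\sqrt{\cE^N_n}-C$, with the energy in turn controlled by a short-time bootstrap \eqref{ineq:nth-entropy}--\eqref{e:enn} that only requires the thickness bound from the \emph{previous} iterate (through the $L^\infty$ bound \eqref{e:22infty} on $\bw^N_{n-1}$). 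This gives a clean induction in $n$: thickness on step $n-1$ $\Rightarrow$ uniform energy on step $n$ $\Rightarrow$ uniform momentum $\Rightarrow$ thickness on step $n$, all on a fixed interval whose length is a function of $\cE_0$, $\th(\rho_0,\O)$, and $\e$ only. Note also that you cannot simply cite \eqref{e:enCC} for the iterates: the cross pairing $\int_\O\bw^N_{n-1}\cdot\bv^N_n\rho^N_n\dx$ does not fit the Schur $L^2(\rho)$-bound directly because the density measures differ, and closing it requires either the $L^\infty$ bound \eqref{e:22infty} (which needs thickness) or a separate argument --- another reason the induction must be ordered carefully.

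One structural remark: the paper also truncates the fluid equation by the Leray projection $P_N$, turning the Navier--Stokes part into a finite-dimensional ODE system at each iteration step; this dispenses with any question of strong-solution existence for the forced Navier--Stokes equation, which your fully decoupled PDE/PDE iteration would have to address (it is salvageable via local strong-solution theory for $k$ large, but it is an extra step, and the Galerkin structure is also what cleanly provides the $H^{-1}$ bound on $\p_t\bu^N$ for the Aubin--Lions compactness in the $N\to\infty$ passage). Your uniqueness and continuation arguments are consistent with the paper's.
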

\begin{proof}

We consider the Leray projections onto the modes residing in the frequency ball $B_N = \{k\in \Z^n: |k|\leq N\}$, $P_N : L^2 \to L^2$, $\widehat{ P_N \bu } = \one_{B_N} \hat{\bu}$. We denote by $\bu^N$ velocities with Fourier support in $B_N$.   We consider the approximated system,
\begin{equation}
\label{eqn:approximated-system}
\begin{cases}
  \partial_t f^{N} + v \cdot \nabla_x f^{N}  =  \bb^{N} \cdot \nabla_v  f^{N}  +\st^N \nabla_v \cdot (  \n_v f^{N} + vf^{N} ) ,\\
   \partial_t \bu^N + P_N(\bu^N \cdot \nabla \bu^N) + \nabla p^N = \nu \Delta \bu^N + \g P_N\left( \int_{\R^n}\left( (v - (\bu^N)_\e ) f^{N}  \right)_\e\dv \right)\\
        \nabla \cdot \bu^N = 0.
\end{cases}
\end{equation}
Note that the Navier-Stokes component is a Galerkin system of ODEs, however, it is coupled with the kinetic PDE so the existence of solutions must be elaborated. 
We can construct solutions to \eqref{eqn:approximated-system} as a limit of the iteration scheme:
\begin{equation}
\label{e:nN}
\begin{cases}
  \partial_t f^N_n + v \cdot \nabla_x f^N_n  =  \bb^N_{n-1} \cdot \nabla_v  f^N_n  +\st^N_{n-1} \nabla_v \cdot (  \n_v f^N_n + vf^N_n ) ,\\
   \partial_t \bu^N_n + P_N(\bu^N_n \cdot \nabla \bu^N_n) + \nabla p^N_n = \nu \Delta \bu^N_n + \g P_N\left( \int_{\R^n}\left( (v - (\bu^N_{n-1})_\e ) f^N_n  \right)_\e\dv \right)\\
        \nabla \cdot \bu^N_n = 0 . 
\end{cases}
\end{equation}
Here, $\bu_0^N = \bu^N_n(t=0) = P_N(\bu_0)$, and $f_n(t=0) = f_0$ is given by the initial condition. The scheme is solved in the following order: first we solve the kinetic equation to obtain $f_n$, which is then used passively to solve the fluid equation as a system of forced ODEs. 

To comment on the well-posedness of the linear kinetic component we recall that for any pair of smooth coefficients $\bb, \st \in C^\infty([0,T)\times \O)$, and $\st \geq c_0$, the Fokker-Planck equation,
\[
 \partial_t f + v \cdot \nabla_x f  =  \bb \cdot \nabla_v  f  +\st \nabla_v \cdot (  \n_v f + vf ),
\]
is classically well-posed in any $H^{M}_Q$, see \cite{Krylov-book}.	In fact, this can by obtained elementary by observing that the viscous approximation
\[
 \partial_t f^R + v \cdot \nabla_x f^R  =  \bb \cdot \nabla_v  f^R  +\st \nabla_v \cdot (  \n_v f^R + \chi(v/R) v f^R ) + \frac1R \D_x f,
 \]
 generates an analytic $C_0$-family of operator as the transport operator is relatively compact compared to the elliptic generator $\st \D_v  + \frac1R \D_x$, see \cite{EngelNagel}; and one has the following bound in $H^{M}_Q$ 
  \[
\ddt \|f^R\|_{H^M_Q}^2\leq C\|f^R\|_{H^M_Q}^2 - c \|\n_v f^R\|_{H^M_Q}^2 ,
\]
with $c,C>0$ independent of $R$ but only depending on the smoothness of parameters, see for example \cite{Shv-EA}. Thus, one obtains a solution in the limit $R \to \infty$ by the standard compactness argument.

We now need to make sure that the sequence of local solutions on each step exists on a common interval $[0,T)$ with uniform bounds in $n$.  To this end, we argue by induction in $n$. First, let us denote $2 \d = \Th(\rho_0,\O)$. Second, as a consequence of the regularity assumption  \eqref{e:r1}, we have a general thickness-dependent bound
\begin{equation}\label{e:22infty}
\| \bw_\rho\|_\infty \leq C(\th(\rho,\O)) \|\bv\|_{L^2(\rho)}  \leq C(\th(\rho,\O)) \sqrt{\cE} + C(\th(\rho,\O)).
\end{equation}
On the initial step, $n=1$, we have constant in time coefficients $\bb_0^N, \st_0^N$  for the kinetic part, and hence according to the above, the solution will exist globally in time $T_1 = \infty$. With a given $f_1^N$, the Galerkin system yields a classical  global solution as the drag force remains bounded on any finite time interval. This completes the initial step. 

To complete the induction, let us make a priori estimates on the energy
\[
\cE^N_n =   \int_{\domain} f^N_n \log \frac{f^N_n}{\mu} \dv \dx + \frac{\b}{2\g} \int_\Omega |\bu_n^N|^2 \dx.
\]
 Let us assume that on some interval of time $[0,T_{n-1})$, the thickness of the flock on the previous step remains substantial $\th(\rho_{n-1}^N,\O) >\d$. Then, the system \eqref{e:nN} is well-posed on $[0,T)$, and according to  \eqref{e:22infty} on that interval we have
\[
\| \bw_{n-1}^N\|_\infty \leq  C(\d) \sqrt{\cE_{n-1}^N} + C(\d).
\]
By a similar computation as in \sect{ss:enen},
\begin{equation}\label{ineq:nth-entropy}
\begin{split}
\ddt \cE^N_n \leq  &- \a I_{vv}^{\bv^N_n, \st_{\rho^N_{n-1}}}  - \b I_{vv}^{(\bu^N_n)_\e} 
	  -   \frac{\b \nu}{\g} \int_\Omega |\nabla \bu^N_n|^2 \dx\\
	  & -  \a \int_\O \st_{\rho^N_{n-1}} |\bv^N_n|^2 \rho_n \dx +\a \int_\O \bw^N_{n-1} \cdot \bv^N_n \rho^N_n \dx \\
	 & + \int_\O (\bu^N_n)_\e \cdot ( (\bu^N_n)_\e - (\bu^N_{n-1})_\e ) \rho^N_n \dx.
\end{split}
\end{equation}
Dropping all the negative terms we estimate the residual energies. First, the fluid energy,
\[
 \int_\O (\bu^N_n)_\e \cdot ( (\bu^N_n)_\e - (\bu^N_{n-1})_\e ) \rho_n \dx \leq \| (\bu^N_n)_\e\|_\infty^2 + \| (\bu^N_n)_\e\|_\infty\| (\bu^N_{n-1})_\e\|_\infty \leq C_\e \cE^N_n + C_\e \sqrt{\cE^N_{n-1} \cE^N_n}.
 \]
Next, the alignment energy,
\[
\int_\O \bw^N_{n-1} \cdot \bv^N_n \rho^N_n \dx \leq \|\bv^N_n\|_{L^2(\rho_n)} \| \bw^N_{n-1}\|_\infty \leq C(\d) (\sqrt{\cE^N_{n-1} \cE^N_n} + \cE^N_n +1).
\] 

 Finally, we arrive at a recursive relation
 \[
 \ddt \cE^N_n \lesssim C_{\e,\d} \cE^N_n + C_{\e,\d} \sqrt{\cE^N_{n-1} \cE^N_n} + C_{\e,\d}.
 \]
 Denoting $\bar{\cE}^N_n = \max\{ \cE^N_0,\ldots,\cE^N_n\}$ we obtain the new equation for the maximal energy by Rademacher's lemma
 \[
 \ddt \bar{\cE}^N_n  \leq C_{\e,\d} \bar{\cE}^N_n + C_{\e,\d}.
 \]
 Given than $\bar{\cE}^N_n(0) = \cE^N_0\leq \cE_0$, we obtain for all $t<T_{n-1}$,
 \begin{equation}\label{e:enn}
\bar{\cE}^N_n (t) \leq   \cE_0 e^{t C_{\e,\d} } + e^{t C_{\e,\d} } -1 .
\end{equation}
So, by further restricting the interval to $t< \frac{\ln 2}{C_{\e,\d}}$ we obtain
\[
\bar{\cE}^N_n (t) \leq 2  \cE_0  + 1.
\]
This translates this into a  lower bound on the thickness which satisfies 
\[
\p_t \th(\rho^N_n,x) = - ( \bv^N_n \rho^N_n)\ast \n \psi_{r_0} (x) \geq - c_1 \cE^N_n - c_2 \geq - c_1 (2  \cE_0  + 1) - c_2 =: -c_3,
\]
 Thus, since $\th(\rho_0,\O) =2\d $, 
\begin{equation}\label{ }
\th(\rho^N_n(t),\O) \geq 2\d - t c_3 > \d,
\end{equation} 
provided $t< \d / c_3$.  So, setting
\[
T_n = \min\{ T_{n-1}, \d / c_3, \frac{\ln 2}{C_{\e,\d}}\}
\]
the same condition on thickness is satisfied on the new time interval. Recalling that $T_0 = \infty$, we conclude that the estimates stabilize on the interval independent of $n,N$ on the time interval 
\[
T = \min\{ \d / c_3, \frac{\ln 2}{C_{\e,\d}}\}
\]
which depends only on the initial thickness and the energy. In particular, $\th(\rho_n^N,\O) >\d$ on $[0,T)$, which translates into 
\[
	\| \partial_x^k \bb^N_n \|_{\infty} \leq C_{k,\d} ( \| \bv^N_n \|_{L^2(\rho_n)} + \| (\bu_n^N)_\e \|_2 ) \leq C_{k,\d,\e} \cE^N_n, \qquad \| \partial_x^k \st^N_n \|_{\infty} \leq C_{k,\d},
\]
Hence, the kinetic equation in \eqref{e:nN} admits a solution on $[0,T)$  with uniformly bounded $H^m_q$-norms. For the same reason, $\bu_n^N$ remain uniformly bounded in $n$.  

Finally, to construct solutions to the approximated system \eqref{eqn:approximated-system}, we use a-priori estimates to get compactness of the sequence--- in particular, using Lemma \ref{l:a-priori-thick} along with the control on the thickness on the energy and the Sobolev embedding $H^{k-1} \hookrightarrow L^\infty$ to get for $t \in [0,T]$, we get: 
 \begin{equation}
 	\begin{split}
		\frac{d}{dt} \| f^N_n \|_{H^{m}_{q}}^2 + \beta \|\nabla_v f^N_n \|_{H^{m}_{q}}^2 &\leq  C_{\epsilon, \beta, T} \| f^N_n \|_{H^{m}_{q}}^2 , \\
		\ddt \|\bu_n^N\|_{H^k}^2 + 2 \nu \|\nabla_x \bu_n^N\|_{\dot{H}^k}^2 &\leq C \|\bu_n^N\|_{H^k}^3 + C_{\e, T} .
	\end{split} 
\end{equation}
By \GL, we conclude that $f^N_n \in L^{\infty}([0,T]; H^m_q(\domain))$. Further, 
\begin{align*}
	 \int_\domain |\partial_t f^N_n|^2 \dv \dx 
	 	&\leq 2 \int_\domain |v|^2 | \nabla_x f^N_n |^2 \dv \dx + 2 \int_\domain |\bb^N_{n-1}|^2 |\nabla_v f^N_n|^2 \dv \dx \\
	 &\quad+ 2 \int_\domain |\st^N_{n-1}|^2 \big( |\Delta_v f^N_n|^2 + |v|^2 (f^N_n)^2 \big) \dv \dx \leq C \| f_n \|_{H^m_q}^2,
\end{align*}
and hence $\partial_t f^N_n \in L^2([0, T]; L^2(\domain))$.  Since $H^m_q \ll H^{m-1}_{q-1} \hookrightarrow L^2$, the Aubin-Lions lemma yields a strongly convergent subsequence $f^N_n \to  f^N \in C([0,T]; H^{m-1}_{q-1}(\domain))$.  For $m$ large enough, the smoothness of $\bu_n^N$ and the classical estimate this implies convergence of each term and hence $f^N$ is a solution to \eqref{eqn:approximated-system}. Since the fluid estimate is a Ricatti equation, we have $\bu_n^N \in L^{\infty}([0,T]; H^k(\O))$.
This implies that $\bu_n^N \rightharpoonup^* \bu^N$ in $L^{\infty}([0,T]; H^k(\O))$. 
Note that the thickness remains uniformly bounded from below on a common interval $\th(\rho^N(t),\O) \geq \d$ for all $N\in \N$ on $[0,T)$. The latter implies that the coefficients of the kinetic part remains uniformly in all $C^l$-classes on $[0,T)$, which in turn implies uniform bounds on $\|f^N\|_{H^m_q}$ by \lem{l:a-priori-thick}. By further restricting the interval $[0,T)$ if necessary and using \eqref{ineq:a-priori-fluid} we obtain also a uniform bound on $\|\bu\|_{H^k}$ on the same time interval.  Note that $T$ depends only on parameters of the system and $\th_0$, $\cE_0$, and $\|\bu_0\|_{H^k}$.

To obtain solutions to the original system, we follow the classical compactness of argument: $f^N \to f \in C([0,T]; H^{m-1}_{q-1}(\domain))$ and $\bu^N \to \bu \in C([0,T]; H^{k-1}(\O))$.  We note that since $\partial_t f \in L^2([0, T]; L^2(\domain))$, we have $f \in C_w([0,T]; L^2(\domain))$, and since $L^2$ is dense in $(H^m_q)^{-1}$, we have weak continuity at the top regularity class $f \in C_w([0,T]; H^m_q(\domain))$. Similarly, $\bu \in C_w([0,T]; H^k(\Omega))$.

The extension statement of the theorem follows  from the estimates above and the 
classical boostrap control of higher order Sobolev norms of the Navier-Stokes solutions via $H^1$, see \cite{Robinson} (note that the drag force plays no role here due to its smoothing properties, \eqref{ineq:a-priori-fluid}).

Uniqueness follows from the kinetic argument carried out in \cite{Shv-EA} for the Fokker-Plank-alignment models combined with the classical estimates on the Navier-Stokes part. 
\end{proof}

\subsection{Existence of weak solutions}

Extension of local solutions constructed in the previous section stumbled upon two major obstacles -- first is of course the classical lack of global existence for solutions of the 3D Navier-Stokes system, and second a possible formation of vacuum or loss of thickness of the particle component. In fact if the particle component is not thick initially then the Sobolev estimates cannot hold unless the model $\cM$ has some thickness-indepenent regularity properties.  Nevertheless, a global existence of weak solutions is achievable for general regular Schur models, as was shown in \cite{KMT2013, Shv-weak}, and we can show the same for the coupled system \eqref{e:FPNSe}.  What is important for subsequent asymptotic analysis is that the constructed solutions are of Leray-Hopf class, so they still satisfy the {\em energy inequality} in differential distributional-in-time form.

\begin{theorem}\label{t:weak}
Let $\cM$ be a regular model of Schur's class. For any initial condition $f_0 \in L^\infty \cap L^1_q$, $q \geq 2$, and $\bu_0\in L^2(\O)$, $\n \cdot \bu = 0$, there exists a global weak solution to \eqref{e:FPNSe} satisfying the following conditions

\underline{Regularity}:  The solution belongs to the class
\begin{equation}\label{e:regweak}
\begin{split}
f  &\in L^\infty([0,T);L^\infty \cap L^1_q) \cap C([0,T); \cD'(\domain)), \quad  \frac{|\n_v f|^2}{f} \in L^1([0,T) \times \domain),\\
\bu & \in L^\infty([0,T); L^2(\O)) \cap L^2([0,T); H^1(\O)) \cap C([0,T); \cD'(\O)),
\end{split}
\end{equation}
for any $T>0$. 

\underline{Energy inequality}: The solution satisfies the following energy equality in distributional-in-time sense on $[0,\infty)$
\begin{equation}\label{e:enineq}
	\ddt \cE 
	  \leq  - \a I_{vv}^{\bv, \st_\rho}  - \b I_{vv}^{\bu_\e} 
	 -  \a \| \bv  \|_{L^2(\kappa_\rho)}^2 +\a (\bv , [\bv]_\rho)_{\kappa_\rho}   -   \frac{\b \nu}{\g} \int_\Omega |\nabla \bu|^2 \dx.
\end{equation}

\underline{Renormalization}: Any such weak solution satisfies the kinetic equation in the renormalized sense of DiPerna-Lions, see \prop{p:renorm} for the full statement.
\end{theorem}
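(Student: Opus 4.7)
The plan is an approximation-compactness argument modeled on Leray's construction for Navier-Stokes combined with the weak existence theory for Fokker-Planck alignment systems developed in \cite{Shv-weak}. First I would regularize the initial data: take smooth $f_0^\d$ with $\th(\rho_0^\d,\O) \geq \d'>0$, converging to $f_0$ in $L^1 \cap L^p$ for every $p<\infty$ with $\|f_0^\d\|_\infty \leq \|f_0\|_\infty$, and a divergence-free $\bu_0^\d \in H^{n+4}$ converging to $\bu_0$ in $L^2$. Then \thm{t:lwp} yields classical local solutions $(f^\d,\bu^\d)$; global extension on any $[0,T]$ follows from the a priori energy bound \eqref{e:enCC}, the fluid Sobolev estimate \eqref{ineq:a-priori-fluid} (whose drag source is controlled by $\cE$ through the fixed mollification scale $\e$), and the continuation criterion in \thm{t:lwp}. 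None of these bounds are uniform in $\d$, but that is reserved for the limit.

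Next I would derive uniform-in-$\d$ bounds on $[0,T]$: (i) the identity \eqref{e:enlaw} combined with \eqref{e:enCC} yields $\cE^\d \leq C(T)$, hence $\bu^\d \in L^\infty_t L^2_x \cap L^2_t H^1_x$ and $f^\d \in L^\infty_t(L\log L)$; (ii) higher $v$-moments $\int \jap{v}^q f^\d \dv \dx$ propagate by testing the kinetic equation against $\jap{v}^q$, with closure provided by the energy control on $\bu$ and $\bv$; (iii) the pointwise bound $\|f^\d(t)\|_\infty \leq \|f_0^\d\|_\infty e^{cT}$ follows from the maximum principle for the Fokker-Planck form \eqref{e:FPbs}, whose diffusion $\st = \b+\a\st_\rho \geq \b$ is strictly positive and whose only non-sign-definite source at a maximum is the $n\st f$ term (bounded by \eqref{e:s-bdd}); (iv) the entropy dissipation in \eqref{e:enlaw} gives $|\n_v f^\d|^2/f^\d \in L^1([0,T]\times\domain)$ uniformly; (v) the equations produce uniform negative-Sobolev bounds on $\p_t f^\d$ and $\p_t \bu^\d$ suitable for Aubin-Lions.

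The main obstacle is passing to the limit in the nonlinear averaged terms. Aubin-Lions yields a subsequence with $\bu^\d \to \bu$ strongly in $L^2_{x,t}$ and $f^\d \to f$ strongly in $L^1_{x,v,t}$, upgraded to every $L^p$, $p<\infty$, by interpolation with the uniform $L^\infty$ bound. Strong $L^2$-convergence of $\bu^\d$ yields uniform convergence of the mollifications $\bu_\e^\d \to \bu_\e$, so both the drift $(v-\bu_\e^\d)f^\d$ in the kinetic equation and the Brinkman source $((v-\bu_\e^\d)f^\d)_\e$ in the fluid equation converge once combined with moment control. For the alignment term, strong $L^1$-convergence of $\rho^\d$ and of the momentum $\rho^\d \bv^\d$---the latter inherited from strong convergence of $f^\d$ plus uniform $L^2(\rho^\d)$ bounds on $\bv^\d$---permits one to invoke the Schur continuity axiom \eqref{e:ker-cont} exactly as in \cite{Shv-weak}, giving $\st_{\rho^\d}\vavg^\d \rho^\d \to \st_\rho \vavg \rho$ weakly against bounded test functions.

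Finally, the energy inequality \eqref{e:enineq} is obtained by passing to the distributional-in-time limit in the identity \eqref{e:enlaw} for the approximations: dissipative terms---the Fisher informations $I_{vv}^{\bv^\d,\st_{\rho^\d}}$ and $I_{vv}^{(\bu^\d)_\e}$, the enstrophy $\int|\n\bu^\d|^2 \dx$, and the positive $\a\|\bv^\d\|^2_{L^2(\kappa_{\rho^\d})}$---pass by lower semicontinuity (Fatou), while the signed pairing $\a(\bv^\d,[\bv^\d]_{\rho^\d})_{\kappa_{\rho^\d}}$ converges by the Schur machinery above. Renormalization in the DiPerna-Lions sense follows from the standard theory applied to the limiting Fokker-Planck equation, exploiting the strict positivity of the diffusion $\st \geq \b$ and the spatial regularity of the drift inherited from the Schur structure of $\cM$; this is sufficient to justify testing against $\beta(f)$ for admissible renormalizations, as will be detailed in \prop{p:renorm}.
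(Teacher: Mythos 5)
Your scheme — regularize the initial data to make it thick and smooth, invoke \thm{t:lwp} for local classical solutions, and then extend globally via the continuation criterion — breaks down at the continuation step. The continuation criterion in \thm{t:lwp} requires \emph{both} $\inf_{[0,T)}\th(\rho(t),\O)>0$ and $\sup_{[0,T)}\|\bu\|_{H^1}<\infty$. Neither is available for your approximations: the energy law \eqref{e:enCC} gives $L\log L$ control on $\rho$ (which prevents concentration but not vacuum, so thickness can degenerate), and in $n=3$ the uniform $H^1$ bound on the fluid is precisely the open Navier--Stokes regularity problem — the fluid a~priori estimate \eqref{ineq:a-priori-fluid} is a Riccati inequality and blows up in finite time. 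The paper sidesteps both obstacles by regularizing the \emph{dynamics} rather than the initial data: the density appearing in the alignment kernel is lifted to $\rho^\d=(\rho+\d)/(1+\d)$, so the kinetic coefficients remain smooth and thick automatically, independent of the solution; and the fluid component is replaced by a finite-dimensional Galerkin projection $P_N$, for which global existence requires only $L^2$ control from the energy. This double regularization $(\d,N)$ is essential — without it your approximations cannot be guaranteed to exist on an arbitrary $[0,T]$.

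A second, independent gap: you claim Aubin--Lions gives $f^\d \to f$ strongly in $L^1_{x,v,t}$. This needs spatial compactness of $f^\d$ uniformly in $\d$, which would require uniform $\n_x f^\d$ control — but these derivative bounds come from \lem{l:a-priori-thick} and degrade as the thickness lower bound is lost in the limit, so they are \emph{not} uniform in $\d$. The paper instead applies the velocity averaging lemma to the formulation \eqref{e:FPAg}, obtaining strong convergence only of macroscopic velocity averages $\rho_\psi = \int \psi(v) f^{\d,N}\dv$. This is exactly what is needed to pass to the limit in the alignment force (which depends only on $\rho$ and $\rho\bv$) and in the drag pairing, and it is available uniformly from the $L^2$ bounds on $g^{\d,N}_1, g^{\d,N}_2$ without any spatial derivative estimates on $f$. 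Your convergence and energy-inequality arguments are otherwise in the right spirit (Fatou for dissipative terms, Schur continuity for the alignment pairing), but the compactness foundation they rest on needs to be replaced by the averaging-lemma framework.
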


The key first step towards obtaining global weak solutions is to construct an approximation scheme that respects the energy law \eqref{e:enlaw} and provides  compactness of the approximating sequence on any time interval $[0,T)$.
Fix a $\d>0$ and $N\in \N$, and artificially lift the flock density to make it uniformly thick:
\begin{equation*}\label{}
\begin{split}
\rho^{\d} & = \frac{\rho + \d}{1+\d} \in \cP(\O) \\
\bb^{\d, N}  &=   -\b \bu^N_\e- \a \bw^{\d},  \quad  \st^{\d} = \b + \a\st_{\rho^{\d}},\\
 \bw^\d(x)  & = \int_\O \phi_{\rho^{\d}}(x,y) \bv(y) \rho(y) \dy.
\end{split}
\end{equation*}
Note that the disconnect between the $\rho^{\d}$ defining the kernel and the $\rho$ in the momentum $\bv(y) \rho(y)$ is intentional, as the pairing between the velocity and the residual constant $\d / 1+\d$, albeit small, would not be under control.

Let us consider the approximation system
\begin{equation}\label{e:FPNSdN}
\begin{cases}
  \partial_t f^{\d,N} + v \cdot \nabla_x f^{\d,N}  =  \bb^{\d,N} \cdot \nabla_v  f^{\d,N}  +\st^\d \nabla_v \cdot (  \n_v f^{\d,N} + vf^{\d,N} ) ,\\
   \partial_t \bu^N + P_N(\bu^N \cdot \nabla \bu^N) + \nabla p^N = \nu \Delta \bu^N + \g P_N\left( \int_{\R^n}\left( (v - \bu^N_\e ) f^{\d,N}  \right)_\e\dv \right)\\
        \nabla \cdot \bu^N = 0  
\end{cases}
\end{equation}
subject to mollified initial condition $\bu_0^N = P_N \bu_0$, and 
\[
f_0^{\d,N}(x,v) = \chi(x,\d v) f_0 \ast \chi_\d (x,v),
\]
where $\chi$ is a standard compactly supported mollifier. Note that 
\[
\| f_0^{\d,N} \|_{L^\infty \cap L^1_q} \leq \| f_0 \|_{L^\infty \cap L^1_q},
\]
and $f^{\d,N}_0 \in H^{M}_Q(\domain)$ for any $M,Q\in \N$.  Note that \eqref{e:FPNSdN} is a hybrid PDE/ODE system with the state space $(f,\bu)\in H^{M}_Q \times \R^{n(N)}$. 

\begin{lemma}\label{lmma:well-posedness-approximation-system}
System \eqref{e:FPNSdN} is globally well-posed in the class  $(f,\bu) \in H^{M}_Q(\domain) \times \R^{n(N)}$.
\end{lemma}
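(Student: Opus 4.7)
The plan is to adapt the Picard iteration from the proof of \thm{t:lwp}, exploiting the crucial simplification that the lifted density satisfies $\rho^\d \geq \d/(1+\d) > 0$ pointwise, so that $\th(\rho^\d,\O)$ is bounded below by a constant depending only on $\d$ for all time, regardless of dynamics. This removes the thickness-loss obstruction that forced the local time in \thm{t:lwp} to be finite; hence I expect global existence essentially for free, from standard iteration plus uniform a priori bounds.

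First I would set up iterates $(f_n^{\d,N}, \bu_n^N)$ in direct analogy with \eqref{e:nN}: given smooth $\bu_{n-1}^N$ (a trigonometric polynomial of degree $\leq N$, hence $C^\infty$ in $x$) and $\rho_{n-1}^\d$, the coefficients $\bb^{\d,N}_{n-1}$ and $\st^\d_{n-1}$ are smooth in $x$ with $\st^\d_{n-1} \geq \b > 0$ uniformly, by the regularity assumption \eqref{e:r1} applied with $\th(\rho^\d,\O) \gtrsim \d$. The linear kinetic equation is then globally well-posed in $H^M_Q$ by the standard Fokker--Planck theory recalled in \thm{t:lwp} (or \cite{Krylov-book}). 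With $f_n^{\d,N}$ in hand, the Galerkin equation for $\bu_n^N$ is a locally Lipschitz ODE on the finite-dimensional space $P_N L^2(\O)$ with smooth forcing, and has a unique local solution by Picard--Lindel\"of.

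Next I would close the iteration on a common interval $[0,T_0]$ with $T_0$ depending only on $\d, N, \cE(0)$, and $\|\bu_0\|_2$. The kinetic Sobolev bound follows from \eqref{e:apriori-kin}, where now the constant $C_B$ depends only on $\d$, $\b$, $\e$, and $\|\bu^N_{n-1}\|_{H^k}$ (with $k$ large enough that $H^k \hookrightarrow W^{m,\infty}(\T^n)$). The Galerkin-truncated fluid equation, being finite-dimensional, admits $L^2$ control via testing with $\bu^N_n$ (the convection term drops out thanks to $P_N$), giving $\ddt \|\bu^N_n\|_2^2 + 2\nu \|\n \bu^N_n\|_2^2 \lesssim C_\e(1 + \cE)$; on $P_N L^2$ all Sobolev norms are equivalent, so uniform-in-$n$ $H^k$ bounds follow on any finite interval. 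A contraction argument in a lower-regularity norm (e.g.\ $L^2 \times L^2$) then produces a unique fixed point, which is the desired local solution of \eqref{e:FPNSdN}.

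The main, and essentially only, obstacle to globalization is an a priori bound on the energy $\cE$. I expect this to follow from an entropy computation analogous to \sect{ss:enen} applied to \eqref{e:FPNSdN}: the modifications from the $\rho$-vs-$\rho^\d$ mismatch in the alignment term produce only a bounded perturbation controllable by $\a\|\bv\|^2_{L^2(\kappa_{\rho^\d})} \lesssim \cE + 1$ via the Schur bound \eqref{e:en-en}, and the Galerkin truncation preserves the fluid energy balance since $P_N$ is self-adjoint. This yields $\ddt \cE \leq C_1 \cE + C_2$ with $C_1, C_2$ depending on $\d, \e, N$ but not on $t$, in the spirit of \eqref{e:enCC}. \GL{} then gives $\cE \in L^\infty_{\loc}([0,\infty))$, and re-running the Sobolev estimates \eqref{e:apriori-kin}--\eqref{e:apriori-NS} on each finite interval extends the local solution globally in time.
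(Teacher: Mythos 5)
Your proposal is correct and takes essentially the same approach as the paper, which simply observes that the argument of Theorem~\ref{t:lwp} applies verbatim, with the lifted density $\rho^\d$ supplying a time-independent lower bound on the thickness and hence removing the only obstruction to extending the local solution. Your elaboration of the Picard iteration, the Galerkin ODE, and the Gr\"onwall energy bound fills in the same steps the paper invokes by reference.
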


This follows from the proof of \thm{t:lwp} and the fact that the thickness remains bounded from below independent of the solution (thanks to the  lifting of the flock's density to $\rho_\delta$).

Let us now go back to the approximate system \eqref{e:FPNSdN} and investigate compactness of the family $(f^{\d,N}, \bu^N)$. 
The system \eqref{e:FPNSdN} obeys the same a priori energy bound as in \eqref{e:enlaw} thanks to the easy to check Schur's property of the kernel $\phi_{\rho^\d}$ relative to the unscaled density $\rho$, see \cite{Shv-weak}.  This results in an a priori bound 
\[
\sup_{t\in [0,T)} \cE^{\d,N}(t)  + \int_0^T \| \n_x \bu^N \|_2^2 \dt\leq C_{T}.
\]
Next, by the classical Leray theory, in dimension $n=3$,
\[
\| \bu^N \cdot \nabla \bu^N\|_{H^{-1}} \leq c \|\bu\|_2^{1/2} \| \n \bu \|_2^{3/2},
\]
and for the drag force,
\begin{equation*}\label{}
\begin{split}
\lan P_N \int_{\R^n}\left( (v - \bu^N_\e ) f^{\d,N}   \right)_\e\dv, \phi \ran & = \lan \int_{\R^n}(v - \bu^N_\e ) f^{\d,N}  dv, P_N \phi_\e \ran \leq \left\| \int_{\R^n}(v - \bu^N_\e ) f^{\d,N}  dv\right\|_1 \|(P_N \phi)_\e\|_\infty\\
&  \leq c_\e ( \| \bv^{\d,N}\|_1 + \| \bu^N_\e \rho^{\d,N} \|_1) \|P_N \phi\|_2 \leq c_\e (\sqrt{\cE^{\d,N}} + \|\bu_\e^N\|_\infty) \|\phi\|_2\\
& \leq C_{\e,T} \|\phi\|_2.
\end{split}
\end{equation*}
So,
\[
\left\| \int_{\R^n}\left( (v - \bu^N_\e ) f^{\d,N}   \right)_\e\dv \right\|_{H^{-1}}  \leq \left\| \int_{\R^n}\left( (v - \bu^N_\e ) f^{\d,N}   \right)_\e\dv \right\|_2 \leq C_{\e,T}.
\]
In other words, the drag force is not contributing any harmful term for the $H^{-1}$-bound on the right hand side of the Galerkin system. We thus obtain
\begin{equation}\label{ }
\int_0^T \| \p_t \bu^N\|_{H^{-1}}^{4/3} \dt \leq C_T.
\end{equation}
Since the drag-force estimate is dimension independent, and nonlinearity improves for $n=2$ we have a similar classical estimate in 2D as well.

Moving on to the kinetic part, an a priori estimate for solutions  is given directly by the  maximum principle,
\[
\sup_{t\in [0,T)}  \|f^{\d,N}\|_\infty \leq C_{T}.
\]

With a view towards application of the averaging lemma, let us write the Fokker-Planck equation as follows
\begin{equation}\label{e:FPAg}
\p_t f^{\d,N} + v\cdot \n_x f^{\d,N}  =  \D_v (g^{\d,N}_1) + \n_v \cdot g^{\d,N}_2,
\end{equation}
where 
\[
g^{\d,N}_1 = \st^\d  f^{\d,N}, \quad g^{\d,N}_2 = \st^\d v f^{\d,N} + \bb^{\d,N} f^{\d,N}.
\]
As shown in \cite{Shv-weak}, and the drag force contributing a bounded element only $\bu^N_\e$, we have $g^{\d,N}_1, g^{\d,N}_2 \in L^2_{t,x,v}$ uniformly in $\d,N$. Applying the averaging lemma to the sequence $f^{\d,N}$ and Aubin-Lions Lemma to the sequence $\bu^N$ we extract a subsequence with macroscopic quantities $\rho_\psi = \int_{\R^n} \psi(v) f^{\d,N} \dv$ converging strong in $L^2_{t,x,v}$ while $\bu^N$ converging strong in $L^2_{t,x}$ and weak in $L^\infty_t L^2_t$ and $\n \bu^N$ weakly in $L^2_{t,x}$. Denote the limit by $(f,\bu)$. As shown in  \cite{Shv-weak} the alignment and Fokker-Planck forces converge to the natural limits (this is where the assumptions \eqref{e:ker-cont} and \eqref{e:s-cont} are required), while the Navier-Stokes  nonlinearity converges classically. The only novelty is the drag force, which presents no issues: for any test-function $\psi$ 
\[
\lan  f^{\d,N},   \bu_\e^N \cdot \nabla_v \psi \ran = \lan \rho^{\d,N}_{\n_v \psi} , \bu^N_\e \ran.
\]
Since $\rho^{\d,N}_{\n_v \psi} \to \rho_{\n_v \psi}$ in $L^2_{t,x}$ and $\bu^N_\e \to \bu_\e$ in $L^2_{t,x}$ (or even in any $H^k$!) we clearly obtain
\[
\lan  f^{\d,N},   \bu_\e^N \cdot \nabla_v \psi \ran \to \lan  f,   \bu_\e \cdot \nabla_v \psi \ran.
\]
At the same time for any $x$-dependent test-function $\psi$,
\begin{equation*}\label{}
\begin{split}
\lan P_N \int_{\R^n}\left( (v - \bu^N_\e ) f^{\d,N}   \right)_\e\dv, \phi \ran & = \lan \int_{\R^n}(v - \bu^N_\e ) f^{\d,N}  dv, P_N \phi_\e \ran \\
& = \lan \bv^{\d,N} \rho^{\d,N} , P_N \phi_\e \ran - \lan \bu^N_\e \rho^{\d,N} , P_N \phi_\e \ran.
\end{split}
\end{equation*}
As before, $\| P_N \phi_\e  \|_\infty \leq c_\e \|P_N \phi \|_2 \leq \| \phi\|_2$, while as established in \cite[(69)]{Shv-weak} we have strong convergence of momenta $\|\bv^{\d,N} \rho^{\d,N} - \bv \rho\|_1 \to 0$ and densities $\| \rho^{\d,N} - \rho\|_1 \to 0$. Clearly, the above paring converges to the natural limit.


Next, the energy inequality follows distributionally  in time as in the classical theory for the $\bu$-component. The only novelty again is the drag force which by strong convergence of components implies 
\[
\lan P_N \int_{\R^n}\left( (v - \bu^N_\e ) f^{\d,N}   \right)_\e\dv, \bu^N  \ran = \lan \int_{\R^n}\left( (v - \bu^N_\e ) f^{\d,N}   \right)_\e\dv, \bu_\e^N  \ran \to \lan \int_{\R^n}\left( (v - \bu_\e ) f   \right)_\e\dv, \bu_\e  \ran.
\]
As to the $f$-component is follows from the entropy \emph{equality} satisfied by all weak solutions as follows from the renormalization proved in \cite{Shv-weak}. Let us state it in the context of the drag force.

\begin{proposition}\label{p:renorm}
Suppose $G \in C^\infty_\loc((0,\infty)) \cap C_\loc([0,\infty))$, $G(0) = 0$, and 
\[
\sup_{0<x<X} x |G''(x)| <C(X), \quad \forall X>0.
\]
Then 
\begin{equation*}\label{}
\begin{split}
G(f), G(f)\bb  & \in L^1(\domain),\\
\st_\rho \n_v f G'(f), \ v f G'(f) &\in L^1(\domain \times [0,T])\\
 \st_\rho |\n_v f|^2 G''(f), \st_\rho \n_v f v f G''(f) & \in L^1(\domain \times [0,T]),\end{split}
\end{equation*}
and the renormalized form of equation  \eqref{e:FPbs} 
\begin{equation}\label{ }
\begin{split}
\p_t G(f) + v \cdot \n_x G(f) + \bb \cdot \n_v G(f) & = \st \n_v \cdot (\s \n_v G(f) + v f G'(f)) \\
&- \st (\s \n_v f + v f) \cdot \n_v f G''(f)\\
\bb  =   -\b \bu_\e- \a \st_\rho \vavg,  \quad  \st &= \b + \a\st_\rho
\end{split}
\end{equation}
holds in distributional sense on test-functions from $W^{1,\infty}(\domain \times [0,T))$.
\end{proposition}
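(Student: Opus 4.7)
The plan is to follow the DiPerna--Lions renormalization scheme in the form implemented in \cite{Shv-weak} for the stand-alone Fokker-Planck-alignment model, and verify that the additional drag contribution $-\beta \bu_\e$ present in $\bb$ is compatible with that framework. The argument proceeds in three stages: (i) justify the integrability of the individual terms appearing in the renormalized equation; (ii) regularize $f$ by convolution with a mollifier in $(x,v)$ and apply the classical chain rule to $G(f^\eta)$; (iii) pass to the limit $\eta\to 0$ using the DiPerna--Lions commutator lemma.

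For (i), the hypothesis $G(0)=0$ together with the growth control $\sup_{0<x<X}x|G''(x)|\leq C(X)$ yields, by direct integration, $|G'(x)|\leq C(1+|\log x|)$ and $|G(x)|\leq C\, x(1+|\log x|)$ on any bounded range of $x$. Since $f\in L^\infty_t(L^\infty\cap L^1_q)$ with $q\geq 2$, $f\log f \in L^1_{t,x,v}$ from the energy inequality \eqref{e:enineq}, and $\bb, \st\in L^\infty_{t,x}$ on $[0,T]$ (using \eqref{e:s-bdd}, \eqref{e:22infty}, and the boundedness of $\bu_\e$ for fixed $\e$), each claimed inclusion follows by direct H\"older/Cauchy-Schwarz estimates. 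The most delicate term $\st|\nabla_v f|^2 G''(f)$ is controlled by the Fisher information $\st|\nabla_v f|^2/f\in L^1$ provided by \eqref{e:enineq}, using $fG''(f)\leq C$ on $\{f\leq\|f\|_\infty\}$.

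For (ii), the mollified function $f^\eta=f\ast\chi_\eta$ is smooth in $(x,v)$ and satisfies
\begin{equation*}
\partial_t f^\eta + v\cdot\nabla_x f^\eta - \bb\cdot\nabla_v f^\eta - \st\,\nabla_v\cdot(\sigma \nabla_v f^\eta + v f^\eta) = r^\eta,
\end{equation*}
where $r^\eta$ collects the commutators of the mollifier with the $x$-transport and with each $(t,x)$-coefficient in the equation. The classical chain rule applied pointwise to $G(f^\eta)$, together with a direct computation of the Fokker-Planck terms using the product rule $\nabla_v\cdot(\sigma\nabla_v G(f^\eta)+vf^\eta G'(f^\eta)) = G'(f^\eta)\nabla_v\cdot(\sigma\nabla_v f^\eta+vf^\eta)+G''(f^\eta)(\sigma\nabla_v f^\eta+vf^\eta)\cdot\nabla_v f^\eta$, yields the identity stated in the proposition with an additional remainder $G'(f^\eta)r^\eta$ on the right-hand side.

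The main (and only nontrivial) step is (iii): showing $G'(f^\eta)r^\eta \to 0$ in $\cD'$. By the classical DiPerna--Lions commutator lemma, each commutator converges to zero strongly in $L^1_\loc$ provided the corresponding coefficient lies in $W^{1,1}_\loc$. Here the transport field $v$ is trivially smooth; the drift $\bb = -\beta\bu_\e - \alpha\st_\rho[\bv]_\rho$ lies in $L^\infty_t W^{k,\infty}_x$ for any $k$---the mollified fluid velocity $\bu_\e$ gains arbitrary $x$-regularity from convolution at scale $\e$, and the alignment component inherits the same regularity from \eqref{e:r1} combined with the energy bound on $\bv$---and similarly $\st$ lies in $L^\infty_t W^{k,\infty}_x$. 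The prefactor $G'(f^\eta)$ grows at most logarithmically in $f^\eta$ and $1/f^\eta$, so the uniform $L^\infty$ bound on $f^\eta$ together with equi-integrability of $f^\eta\log f^\eta$ (inherited from the energy inequality) allows passage to the limit exactly as in the treatment of the alignment-only case in \cite{Shv-weak}. Since the new drag contribution is in fact smoother in $x$ than the alignment piece, no essentially new analytic obstacle arises, and the identity extends from test functions in $C^\infty_c$ to $W^{1,\infty}(\domain\times[0,T))$ by density.
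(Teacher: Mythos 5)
Your proposal follows the same route as the paper: the paper gives no independent proof of Proposition~\ref{p:renorm}, deferring entirely to the DiPerna--Lions renormalization carried out in \cite{Shv-weak} for Fokker-Planck-alignment models, and adds a one-sentence remark that the new drag contribution $\beta\bu_\e$ poses no difficulty because the $\e$-mollification makes it smooth in $x$. Your stages (i)--(iii) are a reasonable reconstruction of what the cited scheme must do, and your main observation---that $\bu_\e$ is at least as regular as the existing alignment drift and hence creates no new commutator obstacle---is exactly the point the paper makes.

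There is, however, one step in your stage (iii) that would not survive scrutiny as written. You justify the vanishing of the $x$-commutator by asserting that the full drift $\bb = -\beta\bu_\e - \alpha\st_\rho[\bv]_\rho$ and the coefficient $\st$ lie in $L^\infty_t W^{k,\infty}_x$ for every $k$, citing \eqref{e:r1} for the alignment piece. But the bounds in \eqref{e:r1} are of the form $C_k(\th(\rho,\O))$, with constants depending on the \emph{global thickness}, and these degenerate as $\th(\rho,\O)\to 0$. Theorem~\ref{t:weak} constructs weak solutions from initial data $f_0 \in L^\infty\cap L^1_q$ with no positivity or thickness hypothesis, and the lower bound on $\th(\rho,\O)$ only becomes available \emph{after} Proposition~\ref{p:Gauss}, which itself relies on the renormalized form of the equation. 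Invoking \eqref{e:r1} here is therefore circular. The mollified drag $\bu_\e$ genuinely is $C^\infty_x$ with bounds depending only on $\|\bu\|_{L^2}$ and $\e$---that part of your claim is fine and is precisely what the paper highlights---but the alignment piece $\st_\rho[\bv]_\rho$ cannot be placed in $W^{k,\infty}_x$ uniformly without a thickness bound. Whatever weaker regularity or continuity \cite{Shv-weak} actually uses to close the commutator estimate for that term (likely the Schur bounds and the continuity condition \eqref{e:s-cont} rather than \eqref{e:r1}) is what you should cite; the higher-order Sobolev claim for the alignment drift is an over-claim and should be removed or replaced.
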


We point out again that due to the regularity of the drag force the approximation procedure used in  \cite{Shv-weak} converges weakly to the limiting transport term $\b \bu_\e \cdot \n_v G(f)$.

Preservation of the $L^1_q$-moment follows by the weak lower-semi-continuity and the corresponding propagation of the moment for the approximate sequence. The details are identical to \cite{Shv-weak} in this case because   the drag force contributes only  
\begin{equation*}\label{}
\begin{split}
\int_\domain \bu_\e^N \n_v f^{\d,N} \jap{v}^q \dv \dx & = - \int_\domain \bu_\e^N  f^{\d,N} \n_v \jap{v}^q \dv \dx \leq \sqrt{\cE^{\d,N}} \int_\domain f^{\d,N} \jap{v}^{q-1} \dv \dx \\
& \lesssim \int_\domain f^{\d,N} \jap{v}^q\dv \dx.
\end{split}
\end{equation*}

\subsection{Gain of positivity and global hypoellipticity}

It is crucial for the hypocoercivity analysis to have a solution with sufficient regularity in weighted Sobolev spaces. In fact, such regularity is required only for the kinetic component $f$, and not the fluid velocity $\bu$ which is inaccessible in 3D as is well-known. So, in this section we will illustrate how to gain more regularity of $f$ coming  from the hypoelliptic structure of the equation.

To gain regularity from the hypoelliptic structure we need to acquire regularity of the coefficients in the kinetic equation, which for regular models $\cM$ relies  on the  thickness of the flock.  The thickness can be regained from the spread of positivity effect that's is natural for solutions of the Fokker-Planck type models, see \cite{Kolm1934,Hormander1967,FranPoli2006,GIMV2019,DesVill2000,HST2020,Mouhot2005,IMS2020}. Specifically for the alignment models, the following result was proved in  \cite{Shv-EA} for classical solutions and translated to weak solutions in \cite{Shv-weak}.  
Since the nature of the drift $\bb$ has not been used in the proof, it carries over to our  Fokker-Planck-Navier-Stokes system ad verbatim.

\begin{proposition}\label{p:Gauss} Let $\cM$ be a regular model of Schur's class. 
For a given weak solution $(f,\bu)$ on a  time interval $[0,T)$ as stated in \thm{t:weak} there exists $a,b>0$ which depend only on the parameters of the model $\cM$, time $T$, and 
\begin{equation}\label{e:BH}
\begin{split}
B & = \sup_{t\in [0,T)} \| \bb \|_\infty,\\
H &=  \sup_{t\in [0,T)}  \int_{\T^n \times \R^n} |v|^2 f \dv \dx + \int_{\T^n \times \R^n}  f | \log f| \dv \dx,
\end{split}
\end{equation}
such that
\begin{equation}\label{e:Gauss}
f(t,x,v) \geq b e^{-a |v|^2} , \qquad \forall (t,x,v) \in \T^n \times\R^n \times [T/2,T).
\end{equation}
\end{proposition}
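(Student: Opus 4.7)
The plan is to reduce the claim to the linear kinetic Fokker-Planck spread-of-positivity results already developed in \cite{Shv-EA, Shv-weak} and to verify that the coupling with the Navier-Stokes equation does not obstruct any of their hypotheses. Writing the kinetic equation in the form \eqref{e:FPbs},
\[
\p_t f + v \cdot \n_x f = \bb \cdot \n_v f + \st \n_v \cdot (\s \n_v f + v f),
\]
the coefficients satisfy $\st = \b + \a \st_\rho \geq \b > 0$ (uniform ellipticity in $v$) and $\|\bb\|_\infty \leq B$ by assumption. Thus, once one treats $\bb$ and $\st$ as external inputs, the equation is a standard kinetic Fokker-Planck equation of Kolmogorov type, and the hypoelliptic smoothing/positivity mechanisms apply independently of the specific origin of the drift.

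The first step is the \emph{local spread of positivity}: if $f(t_0,\cdot,\cdot) \geq \varepsilon$ on a ball $B_r(x_0,v_0) \subset \T^n \times \R^n$, then for some small $\tau = \tau(\b, B) > 0$ one has $f(t_0+\tau,\cdot,\cdot) \geq \varepsilon'$ on a strictly larger set, where $\varepsilon'$ depends quantitatively on $\varepsilon$, $B$, and $\b$. This is the standard Harnack/Hörmander-type mechanism combining $v$-diffusion with the $x$-transport to propagate positivity along the two bracket-generating directions $\p_v$ and $v \cdot \n_x$.

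The second step is to produce an initial seed of positivity. Using mass conservation $\int f = 1$, the moment bound $\int |v|^2 f \leq H$, and the entropy bound $\int f|\log f| \leq H$, a De la Vall\'ee Poussin-style argument rules out concentration and shows the existence of some $(t_*, x_*, v_*) \in [T/4, T/2] \times \T^n \times \R^n$ and a ball around it on which $f \geq \varepsilon_0(H) > 0$. The third step iterates the local spread finitely many times in both $x$ and $v$, exploiting the periodicity in $x$ to cover $\T^n$ and a ladder in $v$ whose growth rate determines the Gaussian profile; this converts the uniform-in-$x$ lower bound into the Gaussian bound $b\, e^{-a|v|^2}$ on $[T/2, T) \times \T^n \times \R^n$.

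The main obstacle, and essentially the only one introduced by the coupling to Navier-Stokes, is that the drift $\bb = -\b \bu_\e - \a \st_\rho \vavg$ must be bounded uniformly in time; but $\|\bu_\e\|_\infty \lesssim_\e \|\bu\|_2$ is controlled by the energy, $\|\st_\rho\|_\infty$ by \eqref{e:s-bdd}, and $\|\vavg\|_\infty$ by a Schur-type estimate like \eqref{e:22infty}, so the hypothesis $\|\bb\|_\infty \leq B$ holds on any interval on which the energy is bounded. Consequently, all the quantitative ingredients entering the argument of \cite{Shv-EA, Shv-weak} are met and the proof transfers verbatim.
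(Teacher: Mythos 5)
Your proposal takes the same high-level route as the paper: reduce to the spread-of-positivity machinery of \cite{Shv-EA, Shv-weak}, observe that $\st = \b + \a\st_\rho \geq \b > 0$ gives uniform $v$-ellipticity, and verify that the NS coupling only affects things through a bounded drift $\bb$. That reduction is indeed the content of the paper's proof, and your discussion of why $\|\bb\|_\infty$ stays bounded (via energy control of $\bu_\e$ and the Schur/type-$(2,\infty)$ control of the averaging) is consistent with the surrounding text, although strictly speaking the proposition is stated conditionally on $B = \sup_t \|\bb\|_\infty$ being finite, so that verification belongs to the application rather than the proof.

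There is one genuine omission. The solutions in question are the \emph{weak} solutions of \thm{t:weak}, not classical ones, and the lower-bound argument in \cite{Shv-EA, Shv-weak} relies on a weak maximum principle for supersolutions, which in the weak setting is only available after one knows the solution is renormalized in the DiPerna--Lions sense. The paper explicitly flags the renormalization property (\prop{p:renorm}) as the crucial ingredient that licenses the transfer of the classical argument to weak solutions; your proposal asserts the proof ``transfers verbatim'' without addressing how the maximum-principle step is justified for weak solutions, which is exactly where one must be careful. Secondly, a minor structural point: you spend a paragraph constructing an initial seed of positivity via a de la Vall\'ee Poussin-type argument, whereas the paper's remark notes that the initial-plateau construction (the analogue of your seed step, needed in \cite{Shv-EA} to handle a possibly degenerate diffusion coefficient) is in fact \emph{not} required here precisely because $\st \geq \b > 0$ holds by design. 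Your sketch is therefore closer to the general argument in the cited work than to the streamlined version that applies in the present setting.
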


\begin{remark}\label{}
 It should be noted that here the thermalization coefficient is bounded from below $\st \geq \b >0$ by design. So, the construction of the initial plateau and the elliptic region is not necessary in this case. However, the renormalization stipulated in  \prop{p:renorm}  is crucial for the application of the weak maximum principle for supersolutions used in the proof.
\end{remark}

By the classical bound, $H \leq C_1 \cH + C_2$ for some absolute $C_1,C_2>0$. So, $H$ will remain bounded on any finite time interval as a consequence of the energy inequality \eqref{e:enineq}. As to $B$, the fluid component $\bu_\e$ is of course bounded by $c_\e \|\bu\|_2$ which in turn is bounded by $\cE$, so it too remains bounded on any finite time interval. The only component which is not a priori bounded is the environmental averaging $\bw_\rho = \st_\rho \ave{\bv}_\rho$. This is where we invoke the type-$(2,\infty)$ property stated in \eqref{e:pq} as a tool to control the drift.

Since $T>0$ can be chosen arbitrary in \prop{p:Gauss} it applies instantaneously and on an arbitrarily long period of time $[t_0, T)$, with $a,b>0$ being dependent of $t_0, T>0$ of course. This makes the density bounded from below $\rho \geq c(a,b)$, and hence uniformly thick $\th(\rho,\O) \geq c(a,b,r)$.  Thanks to the regularity of the model $\cM$, \eqref{e:r1}, all the coefficients of the kinetic equation become $C^\infty$. At this point \cite[Theorem 5.1]{Shv-weak} applies.

\begin{proposition}\label{p:gh}  Let $\cM$ be a regular type-$(2,\infty)$ model of Schur's class.  Then for any weak solution $(f,\bu)$ as stated in \thm{t:weak}, for any $m\in \N$ and  any $t_0,T>0$ there exists a constant $C(t_0,T,m)>0$ such that 
\begin{equation}\label{e:gh}
\|f(t) \|_{H^m_q} \leq C(t_0,T,m), \quad t\in [t_0,T].
\end{equation}
\end{proposition}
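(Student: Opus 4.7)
My plan is to reduce \prop{p:gh} to the hypoelliptic regularization result [Shv-weak, Theorem 5.1] already cited in the paragraph preceding the proposition. That theorem establishes instantaneous Sobolev smoothing $\|f(t)\|_{H^m_q} \leq C(t_0,T,m)$ for Fokker-Planck equations of the form \eqref{e:FPbs}, provided the coefficients $\st,\bb$ are $C^\infty$ with all norms controlled on the interval of interest. Hence everything reduces to establishing smoothness of the coefficients, which by \eqref{e:r1} follows from a uniform lower bound on the thickness $\th(\rho,\O)$ on any subinterval $[t_0,T)\subset (0,T)$. Such a lower bound is delivered by \prop{p:Gauss}, once we verify its hypotheses.

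The only real work, and the main obstacle, is to check that the quantities $B$ and $H$ in \eqref{e:BH} are finite on $[0,T)$ for a weak solution in the class \eqref{e:regweak}. The entropy quantity $H$ is classically comparable to $\cH+1$, which is bounded on $[0,T)$ by the energy inequality \eqref{e:enineq} (or in the non-contractive case by the Gr\"onwall argument \eqref{e:enCC}). The drift $\bb = -\b \bu_\e - \a \st_\rho \ave{\bv}_\rho$ has a harmless fluid contribution, since mollification gives $\|\bu_\e\|_\infty \leq c_\e\|\bu\|_2 \leq c_\e \sqrt{\cE}$. The delicate piece is the alignment part $\bw_\rho = \st_\rho \ave{\bv}_\rho$, which \emph{cannot} be controlled uniformly from an $L^2(\rho)$ bound on $\bv$ for a generic Schur model. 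This is precisely where the type-$(2,\infty)$ assumption enters: by \eqref{e:pq},
\[
\|\bw_\rho\|_\infty \leq C\|\bv\|_{L^2(\rho)} \leq C\sqrt{\int_\domain |v|^2 f \dv\dx} \leq C\sqrt{\cE} + C,
\]
using \lem{l:energy-bound} at the last step. Since $\cE$ is bounded on $[0,T)$, we obtain $B<\infty$, as required.

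With the hypotheses of \prop{p:Gauss} verified on $[0,T)$, applied on the shifted interval $[t_0/2, T)$, we obtain the Gaussian lower bound $f(t,x,v) \geq b\, e^{-a|v|^2}$ for all $(t,x,v)\in \O\times\R^n\times[t_0,T)$. Integrating in $v$ against the mollifier $\chi$ used to define local thickness yields $\th(\rho(t),x) \geq c(a,b,r_0) > 0$ uniformly on $[t_0, T)$. By the regularity assumption \eqref{e:r1}, all spatial derivatives of $\st_\rho$ and $\phi_\rho$ are then uniformly bounded, and combined with the smoothness of $\bu_\e$ (from mollification), this gives $\st,\bb \in L^\infty_t C^k_x$ for every $k$ on $[t_0,T)$. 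Applying [Shv-weak, Theorem 5.1] on the interval $[t_0,T)$ yields \eqref{e:gh}, completing the proof. The only subtlety worth emphasizing is that the conclusion holds for arbitrary $t_0 < T$ since \prop{p:Gauss} produces positivity instantaneously, so the constant $C(t_0,T,m)$ may blow up as $t_0 \to 0$ but is finite for any fixed $t_0 > 0$.
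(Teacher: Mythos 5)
Your proposal is correct and follows essentially the same route as the paper: verify the hypotheses of \prop{p:Gauss} by bounding $H$ via the entropy (hence the energy inequality) and $B$ via the type-$(2,\infty)$ property together with \lem{l:energy-bound}, deduce a uniform Gaussian lower bound and hence a thickness lower bound on $[t_0,T)$, invoke \eqref{e:r1} for smoothness of the coefficients, and conclude by citing the global hypoellipticity result of \cite{Shv-weak}. One small imprecision: applying \prop{p:Gauss} on $[t_0/2,T)$ yields positivity only on the latter half of that interval, not on $[t_0,T)$; the intended argument is to chain the proposition over overlapping subintervals (as the paper does in \prop{p:thick}) so that the bound covers all of $[t_0,T)$, but this is a routine fix and does not affect the validity of the approach.
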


Let us note that if $m \gg q$, then the weights in front of the higher order derivatives of $f$ are inverted as seen from the definition \eqref{e:Sobdef}.  So, in this case \eqref{e:gh} shows that all higher order derivatives  are still bounded on any compact subdomain $\O \times B_R$ but may grow algebraically at $v$-infinity.

It should be noted, again, that the gain of regularity we obtained on the kinetic part has no bearing on regularity of the fluid part as it will depend on the Navier-Stokes regularity itself, and not on the drag force.  As a consequence we obtain a classical global well-posedness in 2D. We state the result without proof as the bootstrap from $\bu\in L^2$ into higher $H^k$ proceeds classically.

\begin{proposition} \label{p:2d_strong}
  Let $n = 2$ and let $\cM$ be a regular type-$(2,\infty)$ model of Schur's class.  Then for any weak solution $(f,\bu)$ as stated in \thm{t:weak}, for any $m\in \N$, $k\in \N$, and  any $t_0,T>0$ there exists a constant $C(t_0,T,m,k)>0$ such that  
  \begin{equation}\label{e:ghu}
\|f(t) \|_{H^m_q} + \|\bu(t)\|_{H^k} \leq C(t_0,T,m,k), \quad t\in [t_0,T].
\end{equation}
\end{proposition}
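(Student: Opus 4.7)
The strategy is to combine the instantaneous regularization of the kinetic component $f$ given by \prop{p:gh} with the classical 2D Navier--Stokes smoothing theory, treating the drag term as an external source. The key structural observation is that the $\e$-mollification renders the drag force arbitrarily smooth in $x$, with bounds governed by low-order quantities alone:
\begin{equation*}
	\Big\| \g \int_{\R^n} \big( (v - \bu_\e) f \big)_\e \dv \Big\|_{H^k(\O)} \leq C_{\e,k}\Big( \|\bu\|_2 + \int_\domain |v| f \dv \dx + 1 \Big) \leq C_{\e,k}(1+\cE),
\end{equation*}
where the last step follows from \lem{l:energy-bound}. Consequently, the fluid equation in \eqref{e:FPNSe} is forced by a source uniformly bounded in $L^\infty([t_0/2, T]; H^k(\O))$ for every $k \in \N$, the bound depending on $\bu$ only through $\|\bu\|_2$ and on $f$ only through its first velocity moment.

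I would then execute a standard bootstrap. From the Leray--Hopf regularity $\bu \in L^\infty_t L^2 \cap L^2_t H^1$ guaranteed by \eqref{e:enineq}, pick an increasing sequence $\tau_1 < \tau_2 < \cdots < \tau_k < t_0$ such that $\bu(\tau_j) \in H^j(\O)$, choosing each $\tau_j$ recursively from the $L^2_t H^j$-regularity gained at the previous stage. At the base step, the 2D enstrophy estimate with Ladyzhenskaya's inequality controlling $(\bu\cdot\nabla)\bu$ yields $\bu \in L^\infty([\tau_1, T]; H^1) \cap L^2([\tau_1, T]; H^2)$ by a Gr\"onwall argument using the uniform source bound just established. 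For $j \geq 2$, the estimate \eqref{e:apriori-NS} of \lem{l:apriori} absorbs the drag contribution into $C_\e(1+\cE)$, while the nonlinear term $\|\bu\|_{H^j}^2\min\{\|\bu\|_{H^j}^2, \|\n\bu\|_\infty\}$ is closed using the 2D embedding $H^{j-1}(\O)\hookrightarrow L^\infty(\O)$ together with the $H^{j-1}$-bound already established on $[\tau_{j-1}, T]$.

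The main obstacle that arises in 3D --- the coupling between $f$ and $\bu$ at high regularity --- is entirely neutralized here by the mollifier: the $H^k$-estimate for $\bu$ depends on $f$ only through its first velocity moment (which is uniformly controlled on $[t_0/2, T]$ by \prop{p:gh}) and on $\bu$ itself only through $\|\bu\|_2$ plus lower-order Sobolev norms secured inductively. Consolidating the constants across the $k$ bootstrap steps and combining with \prop{p:gh} yields the announced bound \eqref{e:ghu}.
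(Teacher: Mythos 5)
Your proposal takes essentially the route the paper implicitly invokes: use \prop{p:gh} for the kinetic component, observe that the $\e$-mollified drag force is uniformly $C^\infty$ in $x$ with bounds controlled by $\|\bu\|_2$ and the first velocity moment of $f$ alone, and then run the classical 2D Navier--Stokes bootstrap with a smooth external source. The paper states the result without proof for exactly this reason, deferring to \cite{Robinson}.

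One step in your sketch as written, however, does not go through. For the higher-order steps $j\geq 2$ you propose to control the nonlinearity via the $\|\nabla_x\bu\|_\infty$ branch of the $\min$ in \eqref{e:apriori-NS}, bounded through the embedding $H^{j-1}(\O)\hookrightarrow L^\infty(\O)$ and the previously established $H^{j-1}$ bound. In two dimensions $H^{s}\hookrightarrow L^\infty$ only for $s>1$, so the embedding fails outright at $j=2$; and even when it does hold it gives $\bu\in L^\infty$, not $\nabla\bu\in L^\infty$, which would require $j-2>1$, i.e.\ $j\geq4$. Thus the stages $j=2,3$ --- the genuinely delicate ones --- are not closed by this reasoning. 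The classical way to close them (and what makes 2D special) is the vorticity structure: $\omega=\curl\bu$ satisfies a scalar advection--diffusion equation whose transport term contributes nothing to $L^2$-type energy estimates by incompressibility, so that $\ddt\|\omega\|_2^2+2\nu\|\nabla\omega\|_2^2\lesssim\|\curl F\|_2\|\omega\|_2$ is linear, and the next step $\ddt\|\nabla\omega\|_2^2$ closes by a Gr\"onwall argument after one application of Ladyzhenskaya and Young, using the $L^\infty_t L^2$-bound on $\omega$ just obtained. Alternatively one can use the Brezis--Gallouet or BKM logarithmic inequality with the $L^\infty$-maximum principle for $\omega$. Once $\bu$ is in $H^s$ for $s>3$ your embedding argument does take over and the rest of the bootstrap is as you describe. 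So the strategy is right, but the bridge from $H^1$ to $H^4$ must go through the 2D vorticity machinery rather than the crude Sobolev embedding.
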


\section{Alignment and synchronization}

\subsection{Functional properties of the models necessary for syncronization}

\begin{definition}\label{d:cons}
We say that the model $\cM$ is {\em conservative} if 
\begin{equation}\label{e:doublestoch}
\int_\O \phi_\rho (x,y) \drho(x)  = \st_\rho(y).
\end{equation}
In terms of averaging properties, this is equivalent to a statement that
for any  $\rho \in \cP(\O)$, $\bv\in L^2(\k_\rho)$
\begin{equation}\label{e:cons}
\int_{\O}  \bv   \dk_\rho = \int_{\O} \ave{\bv}_\rho \dk_\rho.
\end{equation}
\end{definition}
Among the models in our list, the conservatives ones are \ref{CS}, \ref{Mf}, and \ref{Mseg}.

The dynamics under conservative models for basic alignment systems preserves the total momentum. This can be seen, for example, for solutions of the Fokker-Planck-Alignment equation
\begin{equation*}\label{}
\begin{split}
\partial_t f + v \cdot \n_x f &+ \a \nabla_v \cdot ( \st_\rho (\vavg - v) f ) = \s \D_v f,\\
\ddt \bbv &=  \int_\O \st_\rho (\vavg - v) \rho \dx = 0.
\end{split}
\end{equation*}
For our system \eqref{e:FPNSe}, on the other hand, the fluid and particle total momenta are influenced by the drag force,
\begin{equation}\label{ }
\ddt \bbu = \frac{\g}{|\O|} \int_\O \rho( \bv - \bu_\e) \dx, \quad \ddt \bbv = - \b \int_\O \rho( \bv - \bu_\e) \dx.
\end{equation}
A straightforward computation reveals  two conservations laws
\begin{align}
\dot{X}_1=0, \quad X_1 & =   \g \bbv + \b |\O| \bbu, \label{e:X1} \\
\dot{X}_2=0, \quad X_2 &=  \frac12 | \bbv|^2 +  \frac{\b|\O|}{2\g} |\bbu|^2 - \frac{\b |\O|}{2(\g +  \b |\O|)} |\bbu - \bbv|^2. \label{e:X2}
\end{align}

From the basic stochasticity relation \eqref{e:ker-s} it follows that all symmetric models, $\phi_\rho (x,y) = \phi_\rho (y,x)$, are conservative. 
As shown in \cite{Shv-EA} all conservative models are  {\em $p$-contractive} for all $1 \leq p \leq \infty$, meaning that for any  $\rho \in \cP(\O)$, $\bv\in L^p(\k_\rho)$,
\begin{equation}\label{e:contractive}
\| \ave{\bv}_\rho \|_{L^p(\k_\rho)} \leq \| \bv \|_{L^p(\k_\rho)}.
\end{equation}
In fact, contractivity for all $p$ is equivalent to contractivity for $p=1$ and is equivalent to being conservative, see \cite[Lemma 3.9]{Shv-EA}.

It is worth noting that all conservative models automatically satisfy Schur's condition \eqref{e:ker-Schur} due to the uniform bound on the strength function,
\begin{equation}\label{}
\sup_{\rho\in \cP} \left\| \int_\O \phi_\rho(x,\cdot) \drho(x) \right\|_\infty \leq \oS, \quad \sup_{\rho\in \cP} \left\| \int_\O \phi_\rho(\cdot,y) \drho(y) \right\|_\infty  \leq \oS.
\end{equation}
As far as the weak continuity condition \eqref{e:ker-cont}, it clearly holds if $\tilde{\rho}_n = \rho_n$, $\tilde{\rho} = \rho$ due to the continuity of the strength function \eqref{e:s-cont}. However, in general it has to be verified separately.

As a consequence of contractivity and the energy inequality \eqref{e:enineq}, we conclude that the energy for conservative systems is a Lyapunov function. 

\begin{lemma}\label{l:Lyap1} If the model $\cM$ is conservative, then the energy $\cE$ of any weak solution constructed in \thm{t:weak} is not increasing.
\end{lemma}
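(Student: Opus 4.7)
The plan is to show that the right-hand side of the energy inequality \eqref{e:enineq} is pointwise (in $t$) non-positive whenever $\cM$ is conservative, so that $\cE$ is non-increasing in the distributional sense in which \eqref{e:enineq} is asserted in \thm{t:weak}. The only terms in \eqref{e:enineq} whose sign is not manifestly negative are the Fisher informations (which are non-negative by construction, hence contribute non-positively with their minus signs) and the pair
\[
-\a \| \bv \|_{L^2(\kappa_\rho)}^2 + \a (\bv, [\bv]_\rho)_{\kappa_\rho},
\]
so everything reduces to showing this pair is $\leq 0$.

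First, I would apply Cauchy-Schwarz in the weighted space $L^2(\kappa_\rho)$ to obtain
\[
(\bv, [\bv]_\rho)_{\kappa_\rho} \leq \|\bv\|_{L^2(\kappa_\rho)} \, \|[\bv]_\rho\|_{L^2(\kappa_\rho)}.
\]
Next, since $\cM$ is conservative, the $p$-contractivity property \eqref{e:contractive} (with $p=2$) gives
\[
\|[\bv]_\rho\|_{L^2(\kappa_\rho)} \leq \|\bv\|_{L^2(\kappa_\rho)}.
\]
Combining the two bounds yields
\[
(\bv, [\bv]_\rho)_{\kappa_\rho} \leq \|\bv\|_{L^2(\kappa_\rho)}^2,
\]
so the alignment contribution in \eqref{e:enineq} is non-positive.

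Plugging this into \eqref{e:enineq} and using non-negativity of the Fisher informations and of $\int_\O |\n \bu|^2 \dx$, we conclude $\ddt \cE \leq 0$ in the distributional sense, which is precisely the statement that $\cE$ is non-increasing along any weak solution produced by \thm{t:weak}. There is no serious obstacle here: the only subtlety is to verify that $\bv \in L^2(\kappa_\rho)$ so that Cauchy-Schwarz and the contractivity estimate \eqref{e:contractive} are both applicable, which is immediate from the a priori energy bound (since $\|\bv\|_{L^2(\kappa_\rho)}^2 \leq \oS \int_\domain |v|^2 f \dv \dx \lesssim \cE + 1$ by \lem{l:energy-bound}).
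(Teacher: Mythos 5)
Your proof is correct and is exactly the argument the paper intends: the paper states that the lemma follows ``as a consequence of contractivity and the energy inequality \eqref{e:enineq},'' and your Cauchy--Schwarz plus $2$-contractivity step is precisely how contractivity makes the alignment pair $-\a \|\bv\|_{L^2(\kappa_\rho)}^2 + \a(\bv,[\bv]_\rho)_{\kappa_\rho}$ non-positive. The verification that $\bv\in L^2(\kappa_\rho)$ via \lem{l:energy-bound} is a reasonable extra sanity check, though the paper leaves it implicit.
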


As a consequence of the uniform control on the energy several other quantities become controlled uniformly over time, which will have important implications to the long time dynamics and hypocoercivity.  
First, there is no aggregation of the density.

\begin{lemma}\label{l:Lyap2} Suppose the model $\cM$ is conservative, then for any  weak solution constructed in \thm{t:weak}, $\int_\Omega \rho \log \rho \dx$ remains uniformly bounded in time, and there exist $c_1,c_2 >0$  depending only on the initial condition and parameters of the system    such that 
\[
|  \{ \rho \geq c_1 \}|  \geq c_2.
\]
\end{lemma}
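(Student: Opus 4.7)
The approach rests on two observations. First, conservativity provides a Lyapunov property for $\cE$ (Lemma~\ref{l:Lyap1}), which gives uniform-in-time control of the relative entropy $\cH$. Second, the bound on $\cH$ transfers to a bound on $\int_\O \rho\log\rho\dx$ via a Gibbs-type comparison with a Gaussian, and the resulting $L\log L$ bound rules out concentration by Jensen.

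For the first claim, I would start from $\cE(t)\leq \cE(0)$ (Lemma~\ref{l:Lyap1}), which via Lemma~\ref{l:energy-bound} also yields a uniform bound on $\int_\domain |v|^2 f \dv\dx$. Then I would decompose
\[
\int_\domain f\log f \dv \dx = \int_\O \rho\log\rho\dx + \int_\domain f\log(f/\rho)\dv\dx,
\]
and control the conditional entropy from below using nonnegativity of the $v$-relative entropy of the probability density $v\mapsto f(x,v)/\rho(x)$ against the Gaussian $\mu_0(v) = (2\pi\sigma)^{-n/2}e^{-|v|^2/2\sigma}$:
\[
\int_{\R^n}\frac{f}{\rho}\log\frac{f}{\rho}\dv \;\geq\; -\frac{n}{2}\log(2\pi\sigma) - \frac{1}{2\sigma\rho(x)}\int_{\R^n}|v|^2 f\dv.
\]
Multiplying by $\rho(x)$, integrating in $x$, and using the expansion $\cH = \sigma\int f\log f\dv\dx + \tfrac12\int |v|^2 f\dv\dx + c(\sigma,n,|\O|)$, the second-moment contributions cancel exactly, leaving
\[
\int_\O \rho\log\rho\dx \;\leq\; \frac{\cH(t)}{\sigma} + C(\sigma,n,|\O|) \;\leq\; \frac{\cE(0)}{\sigma} + C \;=:\; M,
\]
which is the first conclusion.

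For the non-concentration estimate, I would fix $c_1 := 1/(2|\O|)$, so that $\int_{\{\rho < c_1\}}\rho \dx \leq c_1|\O| = 1/2$, and hence the mass on $A := \{\rho \geq c_1\}$ is at least $1/2$. Either $|A|\geq 1/2$ (in which case any $c_2\leq 1/2$ works), or $|A|<1/2$ and the average $\bar\rho_A := |A|^{-1}\int_A \rho \geq 1/(2|A|)\geq 1$. Jensen's inequality applied to $x\log x$ on $A$, combined with the elementary bound $x\log x\geq -1/e$ on $A^c$, then gives
\[
M \;\geq\; \int_\O \rho\log\rho\dx \;\geq\; \Bigl(\int_A\rho\Bigr)\log\bar\rho_A - \frac{|\O|}{e} \;\geq\; \tfrac{1}{2}\log\frac{1}{2|A|} - \frac{|\O|}{e}.
\]
Solving for $|A|$ yields the explicit bound $|A|\geq c_2 := \tfrac{1}{2}e^{-2M - 2|\O|/e}$, which depends only on the initial data and on the parameters of the system.

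The only structurally essential step is the first one: for non-conservative models one has only the exponential-in-time estimate \eqref{e:enCC}, which fails to deliver a time-uniform control on $\int\rho\log\rho$; conservativity is thus genuinely used in a key way. Everything else reduces to direct applications of the Gibbs and Jensen inequalities.
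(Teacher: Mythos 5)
Your proof is correct and follows essentially the same two-step route as the paper. For the $L\log L$ bound, the paper's starting point is the nonnegativity of the relative entropy $\int f\log(f/\mu_\rho)\,\dv\dx \geq 0$ about the local Maxwellian $\mu_\rho(x,v) = \rho(x)\mu_0(v)$; expanding that inequality gives exactly your Gibbs comparison after multiplying by $\rho$ and integrating, so the two derivations are algebraically identical. For non-concentration, the paper also applies Jensen's inequality to $x\log x$ on $A=\{\rho\geq c_1\}$ with $c_1$ chosen small enough that $\int_A\rho\geq 1/2$, then splits into the cases $\bar\rho_A\leq 1$ and $\bar\rho_A>1$; the only place you are more careful is in making explicit the elementary bound $x\log x\geq -1/e$ on $A^c$, which the paper glosses over when it writes $\int_A\rho\log\rho\leq C$ (absorbing the $|\O|/e$ term into the constant). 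No gap in either version — your write-up simply states that small step that the paper leaves implicit.
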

\begin{proof}
Since the relative entropy about the local Maxwellian remains non-negative
\begin{equation*}
	0 \leq \int_\domain f \log \frac{f}{\mu_{\rho}} \dv \dx,
\end{equation*}
expanding the right-hand side, we obtain
\begin{equation}
\label{eqn:rhologrho}
	\s \int_\Omega \rho \log \rho \dx \leq \s \int_\domain f \log f \dx \dv + \frac12 \int_\domain |v|^2 f \dv \dx + c(\s,n) = \cH +  c(\s,n). 
\end{equation}
Thus $\rho \in L\log L$ uniformly.

 Denote $A = \{ \rho \geq c_1 \}$.  
Using Jensen's inequality applied to the function $\psi(x) = x \log x$,
\[
	\psi \Big( \frac{1}{|A|} \int_{A} \rho \dx \Big) \leq \frac{1}{|A|} \int_{A} \psi(\rho) \dx 
\]
we get
\begin{equation}\label{e:AJens}
	\frac{1}{|A|} \Big( \int_{A} \rho \dx  \Big) \log \Big( \frac{1}{|A|} \int_{A} \rho \dx  \Big)  \leq \frac{1}{|A|} \int_{A} \rho \log \rho \dx \leq \frac{C}{|A|}.
\end{equation}

Since $\int_A \rho \dx = 1 - \int_{A^c} \rho \dx \geq 1 - c_1 |\Omega|$, we can choose $c_1$ small enough so that $\int_A \rho \dx \geq 1/2$. If in this case $\frac{1}{|A|} \int_{A} \rho \dx \leq 1$, then $|A| \geq 1/2$ and we are done. Otherwise, the logarithm above is positive,  so we obtain
\[
\frac{1}{2|A|} \log \Big( \frac{1}{|A|} \int_{A} \rho \dx  \Big) \leq \frac{C}{|A|},
\]
and hence
\[
 \log \Big( \frac{1}{|A|} \int_{A} \rho \dx  \Big) \leq 2C,
 \]
 which implies 
\[
	 \frac{1}{2 |A|} \leq \frac{1}{|A|} \int_{A} \rho \dx  \leq e^{2C},
\]
and, in particular, 
\begin{equation}
	\label{ineq:no_aggregation}
	\frac{1}{2} e^{-2C} \leq |A|.
\end{equation}

\end{proof}

Second, if a model is type-$(2,\infty)$, then it provides a uniform bound in time not only for the entropy $H$ but also for the drift $B$ as defined in \eqref{e:BH}. Consequently, applying \prop{p:Gauss} repeatedly on time intervals $[n,n+2)$ the solution gains positivity \eqref{e:Gauss} uniformly in time.  This implies a uniform bound from below on the density and, hence, on the thickness.
\begin{proposition}\label{p:thick}
Let $\cM$ be a regular conservative type-$(2,\infty)$ model. Then any weak solution constructed in \thm{t:weak} becomes uniformly thick
\[
\th(\rho, \O) \geq c(a,b,t_0).
\]
from any $t\geq t_0>0$, and regularizes instantly into any $H^m_q$ as in \eqref{e:gh}.
\end{proposition}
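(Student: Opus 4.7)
The plan is to combine the Lyapunov property of the energy under conservative dynamics with the spread-of-positivity effect from \prop{p:Gauss}, and then feed the resulting uniform thickness into the hypoelliptic bound of \prop{p:gh}. Since $\cM$ is conservative, \lem{l:Lyap1} gives $\cE(t) \leq \cE(0)$ for all $t$. Through \lem{l:energy-bound} this forces uniform bounds on $\|\bu\|_2^2$ and on $\int_\domain |v|^2 f \dv \dx$, and the classical inequality $\int f |\log f| \leq C_1 \cH + C_2$ (invoked just before the proposition) then renders the entropy quantity $H$ of \eqref{e:BH} uniform in time.

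Next I would control the drift $\bb = -\b \bu_\e - \a \bw_\rho$ uniformly. The mollification gives $\|\bu_\e\|_\infty \leq c_\e \|\bu\|_2$, which is uniform in time by the Lyapunov bound. For the environmental averaging, the type-$(2,\infty)$ assumption yields
\[
\|\bw_\rho\|_\infty \leq C \|\bv\|_{L^2(\rho)} \leq C\left(\int_\domain |v|^2 f \, \dv \dx\right)^{1/2},
\]
where the last step follows from Cauchy-Schwarz applied to $\rho \bv = \int v f \dv$. Hence the drift bound $B$ in \eqref{e:BH} is uniformly bounded in time as well.

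With $B$ and $H$ under uniform control I would then apply \prop{p:Gauss} on each time window of length $2 t_0$ obtained by translating the initial time to $s = k t_0$, $k \in \N$; on each such window the proposition yields $f(t,x,v) \geq b e^{-a|v|^2}$ throughout $[s + t_0, s + 2 t_0) \times \O \times \R^n$, with $a,b$ depending only on $B$, $H$, the window length $2 t_0$, and the parameters of $\cM$, and hence independent of $k$. Patching these windows together covers the full ray $[t_0, \infty)$ with a single Gaussian lower bound. Integrating in $v$ gives $\rho(t,x) \geq b \int_{\R^n} e^{-a|v|^2} \dv =: c_*$, and the definition of local thickness then delivers $\th(\rho(t), \O) \geq c_* \int_\O \chi \dy$ uniformly for $t \geq t_0$.

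Finally, once uniform thickness is established, assumption \eqref{e:r1} makes all coefficients of the kinetic equation smooth with constants depending only on $\th$ and $\cE$ (both uniform), so \prop{p:gh} applies directly and furnishes the weighted Sobolev bound \eqref{e:gh}. The subtlest point I anticipate is the application of \prop{p:Gauss} on translated windows: I would need to confirm that the constants $a,b$ depend on the solution only through $B$, $H$, and the window length, so that shifting the time origin does not introduce hidden dependencies on initial data at time zero. Given the parabolic, time-homogeneous nature of the argument in the cited works, this should hold, but it is the place where care is needed.
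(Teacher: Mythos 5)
Your proof is correct and follows essentially the same route as the paper: conservativity plus \lem{l:Lyap1} gives a uniform energy bound, type-$(2,\infty)$ converts this to a uniform bound on the drift $B$ in \eqref{e:BH}, repeated application of \prop{p:Gauss} on translated windows yields a uniform Gaussian floor and hence uniform thickness, and \eqref{e:r1} then unlocks \prop{p:gh}. The subtlety you flag (that $a,b$ depend only on $B$, $H$, and window length) is exactly the point the paper relies on when applying \prop{p:Gauss} on the shifted intervals.
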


Another property that plays a crucial role in hypocoercivity analysis, specifically in the computations of spectral gaps, is the so-called ball-positivity:
\begin{equation}\label{e:bpenergies}
(\bv, \ave{\bv}_\rho)_{\k_\rho} \geq \| \ave{\bv}_\rho \|_{L^2(\k_\rho)}^2, \qquad \forall \bv\in L^2(\k_\rho).
\end{equation}
Ball-positive models on our list are \ref{CS}, provided the kernel is Bochner-positive $\phi = \psi \ast \psi$, $\psi \geq 0$, and \ref{Mf}, \ref{Mseg}. It is somewhat non-trivial to show that ball-positive models are conservative, see \cite[Proposition 3.13]{Shv-EA}. We thus have the following implication table.

\medskip
\begin{center}
\begin{tikzpicture}

\tikzstyle{boxr} = [rectangle, rounded corners,text centered, text width=3cm, draw=black, fill=green!10]

\tikzstyle{boxg} = [rectangle, rounded corners,  text centered, text width=3cm, draw=black, fill=green!10]

\tikzstyle{boxgwide} = [rectangle, rounded corners,  text centered, text width=4cm, draw=black, fill=green!10]

\tikzstyle{empty} = [ellipse,  text centered, draw=black]

\tikzstyle{arro} = [-,>=stealth]

\tikzstyle{arrow} = [->,>=stealth]
\tikzstyle{arrow2} = [<->,>=stealth]

\node (cons) at (4,0)  [boxg] {conservative};
 
\node (contr) at (8,0)  [boxg] {contractive};

\draw[arrow2] (cons) -- (contr);

\node (sym) at (0,0)  [boxg] {symmetric};
\draw[arrow] (sym) -- (cons);

\node (bp) at (4,-2)  [boxr] {ball-positive};
\node (psd) at (0,-2)  [boxg] {positive semi-def};

\draw[arrow] (bp) -- (psd);
\draw[arrow] (bp) -- (cons);

\end{tikzpicture}

\end{center}
\medskip

It is clear that the equality in \eqref{e:bpenergies} is always achieved on constant fields $\bv =\bbv$.  If $\bv$ is orthogonal to constants, i.e. $\bbv = 0$, we may expect a slightly more coercive property 
\begin{equation}\label{e:bpmore}
(\bv, \ave{\bv}_\rho)_{\k_\rho} \geq (1+\tilde{\d}) \| \ave{\bv}_\rho \|_{L^2(\k_\rho)}^2, \qquad \forall \bv\in L^2(\k_\rho), \bbv = 0.
\end{equation}
This translates into a crucial spectral gap inequality
\begin{equation}\label{e:spgap}
	\| \bv \|_{L^2(\kappa_\rho)}^2 - (\bv, \vavg)_{\kappa_\rho} \geq \d \|\bv\|_{L^2(\kappa_\rho)}^2, 
\end{equation}
for some $\d>0$. The utility of the spectral gap condition will be explained in the next section. It is important to note that for all ball-positive models on our list, the size of the spectral gap $\d>0$ depends only on the uniform thickness of the flock, see \cite[Proposition 4.16]{Shv-EA}. The table below gives a summary of the estimates from below on the spectral gap up to a constant depending only on the parameters of the system.
\medskip
\begin{center}
\begin{tabular}{  c |  c | c |c } 
 { Model} &  \ref{CS} &  \ref{Mf} &  \ref{Mseg}  \\ 
 \hline
\multirow{2}{*}{spectral gap} &\multirow{2}{*}{ $ \th^3 $ }&  \multirow{2}{*}{$ \th$ }& \multirow{2}{*}{$ \th^{2L}$}\\
 & & & 
 \end{tabular}
\end{center}

\subsection{Centered entropy} 
\label{ss:centered-entropy}
For conservative models the difference of energies present in the basic entropy law \eqref{e:enlaw} can be expressed in terms of variances from the total momentum $\bbv$: 
\[
\| \bv  \|_{L^2(\kappa_\rho)}^2 - (\bv , [\bv]_\rho)_{\kappa_\rho} = \| \bv -\bbv  \|_{L^2(\kappa_\rho)}^2 - (\bv-\bbv, [\bv -\bbv]_\rho)_{\kappa_\rho} .
\]
Moreover, since the fluid enstrophy (by the Poincare inequality) controls the correspondence variance of fluid velocity $|\bu - \bbu|^2$,
 we need to look into the entropy-energy function centered on the corresponding momenta:
\[
\ocE =  \sigma  \int_{\domain} f \log \frac{f}{\mu_{\bbv}} \dv \dx  + \frac{\b}{2\g} \int_\Omega |\bu - \bbu|^2 \dx + \frac{\b |\O|}{2(\g +  \b |\O|)} |\bbu - \bbv|^2.
\]
Note that expanding on the kinetic energies, the difference between $\cE$ and the previously considered entropy is given by $X_2$,
\begin{equation}\label{ }
\cE = \ocE + X_2,
\end{equation}
which is a conserved quantity. As a result, for any weak solution constructed in \thm{t:weak} the  centered entropy satisfies the same inequality,
\begin{equation}\label{law:shifted_energy}
\ddt \ocE \leq  - \a I_{vv}^{\bv, \st_\rho}  - \b I_{vv}^{\bu_\e} 
	 -  \a \| \bv -\bbv  \|_{L^2(\kappa_\rho)}^2 +\a (\bv-\bbv , [\bv -\bbv]_\rho)_{\kappa_\rho}  -   \frac{\b \nu}{\g} \int_\Omega |\nabla \bu|^2 \dx.
\end{equation}

Suppose  for a moment that our flock is uniformly thick in time and the spectral gap $\d$ as in \eqref{e:spgap} depends only on the thickness of the flock. Then keeping only the spectral gap term in the energy law \eqref{e:enlaw}, we obtain
\begin{equation}
	\label{law:H1}
	\ddt \ocE
		 \leq - \d  \| \bv - \bbv \|_{L^2(\kappa_\rho)}^2.
\end{equation}
On the other hand, using the identity
\[
 \| \bv - \bbv \|_{L^2(\kappa_\rho)}^2 + I_{vv}^{\bv, \st_\rho} = I_{vv}^{\bbv, \st_\rho},
\]
we rewrite the entropy equation as follows
\begin{equation}
	\label{law:H2}
	\ddt \ocE
		\leq   - \a I_{vv}^{\bbv, \st_\rho} - \b I_{vv}^{\bu_\e} - \nu \int_\Omega |\nabla \bu|^2 \dx +  \a (\bv - \bbv, [\bv - \bbv]_\rho)_{\kappa_\rho} .
\end{equation}
Since $(\bv - \bbv, [\bv - \bbv]_\rho)_{\kappa_\rho} \leq   \| \bv - \bbv \|_{L^2(\kappa_\rho)}^2$,  and $c\leq \st_\rho \leq \oS$, 
combining the two laws \eqref{law:H1} and  \eqref{law:H2}, we obtain 
\begin{equation}
	\label{law:H3}
	\begin{split}
	\ddt \cE
		&\lesssim  -I_{vv}^{\bbv} - I_{vv}^{\bu_\e} -  \int_\Omega |\nabla \bu|^2 \dx -\int_\Omega  |\bv - \bbv|^2 \rho \dx.
	\end{split}
\end{equation}
Let us now link the right hand side of the obtained entropy law to the macroscopic components of $\ocE$. First, by the Poincare inequality, we obtain
\[
 \int_\Omega |\bu - \bbu|^2 \dx  \lesssim \int_\Omega |\nabla \bu|^2 \dx .
\]

We now use the Fisher informations via to bound macroscopic energies. 
To this end observe that for any field $\bg= \bg(x)$ we have
\begin{equation}\label{ }
I_{vv}^{\bg} = I_{vv}^{\bv} + \int_\O | \bv - \bg|^2 \rho \dx  \geq \int_\O | \bv - \bg|^2 \rho \dx.
\end{equation}

Using the above with $\bg = \bu_\e$  
we obtain, as a consequence of \eqref{law:H3},
\[
	\ddt \cE
		\lesssim -I_{vv}^{\bbv} - \int_\Omega  |\bv - \bbv|^2 \rho \dx - \int_\Omega |\bv - \bu_\e|^2 \rho \dx- \int_\Omega |\bu - \bbu|^2 \dx .
\]


The difference of momenta $|\bbu - \bbv|^2$ can be recovered from the three macroscopic energies using the control on the concentration of density stated in \lem{l:Lyap2}.
We also use the fact that $\int_{\Omega} |\bu_\e - \bbu|^2 \dx = \int_{\Omega} |(\bu - \bbu)_\e|^2 \dx \leq \int_{\Omega} |\bu - \bbu|^2 \dx$.
By the  Chebyshev inequality,
\begin{align*}
	c_1 c_2 |\bbu - \bbv|^2 & \leq c_1 |\bbu - \bbv|^2 | \{ \rho \geq c_1 \}|
		=  c_1 \int_{ \{ \rho \geq c_1 \} } |\bbu - \bbv|^2  \dx \\
		&\leq  \int_{ \{ \rho \geq c_1 \} } |\bbv - \bu_\e|^2 \rho \dx +  c_1 \int_{ \{ \rho \geq c_1 \} } |\bu_\e - \bbu|^2  \dx \\
		&\lesssim \int_\Omega |\bbv - \bu_\e|^2 \rho \dx +  \int_{\Omega} |\bu_\e - \bbu|^2 \dx \\
		&\lesssim \int_\Omega  |\bbv - \bv|^2 \rho \dx +\int_\Omega |\bv - \bu_\e|^2 \rho \dx+  \int_{\Omega} |\bu - \bbu|^2 \dx  . 
\end{align*}

Finally,  the entropy law takes a form suitable for subsequent hypocoercivity analysis
\begin{equation}\label{e:eepre}
	\ddt \ocE  
		\lesssim  -I_{vv}^{\bbv}  -\int_\Omega  |\bv - \bbv|^2 \rho \dx-\int_\Omega |\bu - \bbu|^2 \dx-  |\bbu - \bbv|^2.
\end{equation}

\subsection{Hypocoercivity}
\label{ss:hypo}

\begin{theorem}\label{thm:main}
Suppose  $\cM$ is a regular conservative type-$(2,\infty)$ model of Schur's class  and the size of the spectral gap \eqref{e:spgap} depends only on the thickness $\d = \d(\th)$. Then any weak solution $(f,\bu)$ constructed in \thm{t:weak} with sufficiently high moment $q >n+4$, relaxes exponentially fast to a Maxwellian distribution for $f$ and uniform velocity for $\bu$ synchronized  to a common vector $\bbw \in \R^n$:
\[
\| f- \mu_{\bbw} \|_1 + \| \bv - \bbw\|_{L^2(\rho)}+ \| \bu - \bbw \|_2 \leq c_1e^{-c_2 t}.
\]
The constants $c_1,c_2$ depend only on the initial condition and the parameters of the system. The vector $\bbw$ can be determined from the initial condition 
\[
\bbw = \frac{\g \bbv_0 + \b |\O| \bbu_0}{\g + \b |\O|}.
\]
In particular, the statement applies to the following models:
\begin{itemize}
\item \ref{CS} with Bochner-positive kernel $\phi = \psi \ast \psi$;
\item \ref{Mf} with $\phi>0$;
\item \ref{Mseg} provided $\supp g_l = \O$.
\end{itemize}
\end{theorem}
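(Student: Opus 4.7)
The starting point is the centered entropy inequality \eqref{e:eepre}, which provides coercive control of the $v$-Fisher information $I_{vv}^{\bbv}$, both macroscopic variances $\int_\O|\bv-\bbv|^2\rho\dx$ and $\|\bu-\bbu\|_2^2$, and the momentum gap $|\bbu-\bbv|^2$. A log-Sobolev inequality allows $I_{vv}^{\bbv}$ to control $\int f\log(f/\mu_{\rho,\bbv})\dv\dx$, i.e.\ the distance from $f$ to the \emph{local} Maxwellian centered at $\bbv$; what is still missing for full coercivity of $\ocE$ is the $L\log L$-component $\int\rho\log(|\O|\rho)\dx$, for which \eqref{e:eepre} furnishes no dissipative counterpart. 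The plan is to recover it by a Bakry-\'Emery/Villani-style hypocoercivity argument, in the spirit of \cite{Shv-EA,Shv-weak}, that converts $v$-dissipation into $x$-dissipation through the commutator $[\nabla_v, v\cdot\nabla_x] = \nabla_x$ induced by the free transport.

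Concretely, on a time interval $[t_0,\infty)$ I invoke \prop{p:thick} to secure a uniform thickness $\th(\rho(t),\O) \ge \th_\ast > 0$ and hence a uniform spectral gap $\d=\d(\th_\ast)$ from \eqref{e:spgap}, together with $C^\infty$-smoothness of the coefficients $\st$ and $\bb$ via \eqref{e:r1}; \prop{p:gh} then provides the Sobolev regularity needed to manipulate Fisher quantities rigorously. I augment $\ocE$ by a small quadratic form in mixed Fisher informations,
\[
\F = \ocE + a\int_\domain \frac{|\nabla_v f|^2}{f}\dv\dx + 2b\int_\domain \frac{(\nabla_x f)\cdot(\nabla_v f)}{f}\dv\dx + c\int_\domain \frac{|\nabla_x f|^2}{f}\dv\dx ,
\]
with $a,b,c>0$ small and $b^2 < ac$, so that $\F$ is comparable to $\ocE$ modulo a non-negative correction. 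Differentiating $\F$ along \eqref{e:FPbs}, the key cancellation from the commutator produces a coercive $-I_{xx}$ contribution from the cross term, which combined with the existing $v$-Fokker-Planck dissipation closes coercivity in all phase-space directions.

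For appropriately small $a,b,c$ (depending on $\d, \th_\ast$ and the parameters of the system), the resulting differential inequality takes the form $\ddt\F \leq -\lambda \F$ for some $\lambda>0$, from which $\ocE(t)\lesssim e^{-\lambda t}$. The stated alignment then follows by translation: the \CK\ yields $\|f-\mu_{\bbv}\|_1 \lesssim \sqrt{\ocE}$, the Poincar\'e inequality gives $\|\bu-\bbu\|_2 \lesssim \sqrt{\ocE}$, and $|\bbu-\bbv|\lesssim\sqrt{\ocE}$. The conservation law \eqref{e:X1} forces $\bbu,\bbv$ to converge to the explicit common limit $\bbw=(\g\bbv_0+\b|\O|\bbu_0)/(\g+\b|\O|)$ exponentially fast, and the triangle inequality promotes the three estimates into ones centered on $\bbw$; the estimate on $\|\bv-\bbw\|_{L^2(\rho)}$ is absorbed into the decay of $\int_\O|\bv-\bbv|^2\rho\dx$ plus the decay of $|\bbv-\bbw|$.

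The main obstacle is closing the hypocoercive computation in the coupled setting: the drift $\bb = -\b\bu_\e - \a\st_\rho\ave{\bv}_\rho$ is $x$-dependent, so differentiating the $I_{xv}$ and $I_{xx}$ components of $\F$ generates error terms proportional to $\|\nabla_x\bu_\e\|_\infty$ and $\|\nabla_x(\st_\rho\ave{\bv}_\rho)\|_\infty$, which must be reabsorbed into the coercive $I_{vv},I_{xv},I_{xx}$ dissipation. For this the \emph{local averaging} of the drag force is indispensable: $\|\nabla_x\bu_\e\|_\infty\leq C_\e\|\bu\|_2\leq C_\e\sqrt{\ocE(0)}$ remains uniformly bounded in time by the Lyapunov property \lem{l:Lyap1}, a bound that would fail for the raw $\bu$ in three dimensions. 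The control on $\|\nabla_x(\st_\rho\ave{\bv}_\rho)\|_\infty$ uses \eqref{e:r1} together with the type-$(2,\infty)$ hypothesis, which turns the $L^2(\rho)$-bound on $\bv$ provided by $\ocE$ into the required pointwise bound on the average.
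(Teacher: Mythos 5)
Your proposal is correct and takes essentially the same route as the paper: after securing uniform thickness and instantaneous Sobolev regularity from \prop{p:thick} and \prop{p:gh}, one augments the centered entropy $\ocE$ by a positive-definite quadratic form in the Fisher informations $\cI_{vv},\cI_{xv},\cI_{xx}$ (computed relative to $\mu$), exploits the free-transport commutator to generate coercivity in $\cI_{xx}$, absorbs the drift-coefficient errors $\|\nabla_x\bb\|_{L^2(\rho)}$ via the regularity bounds \eqref{e:r1} and the mollification $C_\epsilon\|\bu\|_2$, and closes a Gr\"onwall inequality; your normalization (small $a,b,c$ with $b^2<ac$ on the Fisher block, entropy coefficient $1$) and the paper's ($\cI_{vv}+\cI_{xv}+\cI_{xx}+C\ocE$ with $C$ large) differ only by a rescaling. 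One small imprecision worth noting: the Fisher quantities you wrote, e.g.\ $\int|\nabla_v f|^2/f\,\dv\dx$, should be the $\mu$-relative versions $\int|\nabla_v h|^2/h\,\dmu$ with $h=f/\mu$ (equivalently the centered $I_{vv}^{\bbv}$ after the velocity shift the paper performs), since only these are the ones driven negative by the Fokker--Planck generator and comparable to $\ocE$ via log-Sobolev.
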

\begin{proof}
Due to the assumptions on the model $\cM$ the statements of \prop{p:gh} and \prop{p:thick} apply to any weak solution constructed in \thm{t:weak}.  We thus have sufficient regularity to justify the analysis below and due to the spectral gap assumption the centered energy satisfies \eqref{e:eepre}.

To proceed further for convenience let us shift the kinetic velocity variable by the total momentum
\[
\tilde{f}(x,v,t) = f(x,v+\bbv,t), \quad \tilde{\bv} = \bv - \bbv, \quad \tilde{\rho} = \rho.
\]
Dropping tildes, the kinetic equation in the new variables reads
\[
 \partial_t f + (v + \bbv) \cdot \nabla_x f +  \a \nabla_v \cdot ( \st_\rho (\vavg - v) f )  = \bbv_t \cdot \n_v f+  \b \nabla_v \cdot \big( (v + \bbv - \bu_\e) f \big) 
  +  \sigma ( \b + \a \st_\rho) \Delta_v f .
\]
Note that in the new variables the total momentum vanishes, and the Fisher information becomes centered at $0$.  We rewrite the energy inequality  \eqref{e:eepre} as follows
\begin{equation}\label{e:eehypo}
	\ddt \ocE  
		\lesssim  -I_{vv}^{0}  -\| \bv\|^2_{L^2(\rho)} -\|\bu - \bbu\|^2_2  -  |\bbu - \bbv|^2.
\end{equation}

From now on,  for notational simplicity, we will assume that  $\s = 1$. 
Let us write the equation for the new distribution $h = \frac{f}{\mu}$. For this purpose it is simpler to view the equation as a Fokker-Planck equation with a drift similar to \eqref{e:FPbs},
\begin{equation}\label{ }
\begin{split}
  \partial_t f + (v + \bbv) \cdot \nabla_x f  &= \bbv_t \cdot \n_v f + \bb \cdot \nabla_v  f  +\st \nabla_v \cdot (  \n_v f + vf ) ,\\
\bb =  \b \bbv -\b \bu_\e & - \a \st_\rho \vavg,  \quad  \st = \b + \a\st_\rho.
  \end{split}
\end{equation}
Then the $h$-equation reads
\begin{equation}\label{e:FPAh}
h_t + (v + \bbv) \cdot \n_x h = \bbv_t \cdot \n_v h - v \cdot \bbv_t h+ \st (  \D_v h - v \cdot \n_v h) + \bb \cdot \n_v h - v \cdot \bb h.
\end{equation}
 Denoting
\[
B = (v + \bbv) \cdot \n_x, \quad A =  \n_v, \quad A^* = (v  -  \n_v) \cdot,
\]
where $A^*$ is understood relative to the inner product of the weighted space $L^2(\mu)$,
we arrive at 
\begin{equation}\label{e:FPALN}
h_t =  - \st A^* A h- Bh - A^*( \bb h) - A^*( \bbv_t h).
\end{equation}

Let us introduce the full set of Fisher informations
\[
\cI_{vv} =  \int_\domain \frac{|\n_v h|^2}{h} \dmu,
\quad  \cI_{xv} =  \int_\domain \frac{\n_x h \cdot \n_v h}{h} \dmu, \quad  \cI_{xx}=  \int_\domain \frac{|\n_x h|^2}{h} \dmu,
 \]
 The  information that captures full gradient  
 \begin{equation}\label{e:fF}
\cI = \cI_{vv} +\cI_{xx} 
\end{equation}
dominates the relative entropy by the classical log-Sobolev inequality
\[
\cI_{vv} +\cI_{xx} \geq \l \cH.
\]

We will now apply the estimates obtained in \cite{Shv-EA} with the inclusion of an abstract drift term $\bw$ which results in the following list:
\begin{equation*}\label{}
\begin{split}
\ddt \cI_{vv} & \leq  -  \cD_{vv}-  c_0 \cI_{vv} -2  \cI_{xv} +  2  (\bb, \bv)_\rho\\
\ddt \cI_{xv}  &\leq - \frac18  \cI_{xx} + c_1 \cI_{vv}+ \frac12   \cD_{vv} + \frac12  \cD_{xv} + c_2 \| \n_x \bb \|^2_{L^2(\rho)}  + c_3 \| \bb\|^2_{L^2(\rho)} \\
\ddt \cI_{xx}  &\leq   c_4 \cI_{vv} - \cD_{xv} + c_5 \| \n_x \bb \|^2_{L^2(\rho)},
\end{split}
\end{equation*}
where 
\[
\cD_{\bullet} = \int_\domain \st h |\n_\bullet \bar{h}|^2 \dmu, \quad \bar{h} = \log h.
\]

With the assumptions on the alignment drift we have 
\[
\| \n_x (\st_\rho \ave{\bv}_\rho) \|^2_{L^2(\rho)} \leq C \|  \st_\rho \ave{\bv}_\rho \|^2_{L^2(\rho)} \leq C \| \bv \|^2_{L^2(\rho)}
\]
and
\[
\| \n_x \bu_\e \|^2_{L^2(\rho)} \leq c_\e \|\bu - \bbu\|_2^2.
\]
Furthermore, in the first equation we split $\cI_{xv} \leq \frac{1}{16} \cI_{xx} + 16 \cI_{vv}$. 
Incorporating these bounds into the system above we obtain
\begin{equation*}\label{}
\begin{split}
\ddt \cI_{vv} & \leq  -  \cD_{vv} + c_0 \cI_{vv} + \frac{1}{16} \cI_{xx} +  C_1 \| \bv \|^2_{L^2(\rho)} + C_2   \|\bu - \bbu\|_2^2\\
\ddt \cI_{xv}  &\leq - \frac18  \cI_{xx} + c_1 \cI_{vv}+ \frac12   \cD_{vv} + \frac12  \cD_{xv}   + C_3 \| \bv \|^2_{L^2(\rho)} + C_4   \|\bu - \bbu\|_2^2 \\
\ddt \cI_{xx}  &\leq   c_4 \cI_{vv} - \cD_{xv} + C_5 \| \bv \|^2_{L^2(\rho)} + C_6   \|\bu - \bbu\|_2^2 ,
\end{split}
\end{equation*}
Adding up the above we absorb all the dissipation terms $\cD$'s and obtain

\[
\ddt (\cI_{vv}+\cI_{xv} +\cI_{xx} ) \leq c_1 \cI_{vv}  - c_2 \cI_{xx}  + c_3 \| \bv \|^2_{L^2(\rho)} + c_4  \|\bu - \bbu\|_2^2
\]

Adding this to a large constant multiple of \eqref{e:eehypo} we obtain
\[
\ddt ( \cI_{vv}+\cI_{xv} +\cI_{xx} + C \ocE) \leq - c_1  (\cI_{vv}  + \cI_{xx}) - c_2 \| \bv \|^2_{L^2(\rho)} - c_3  \|\bu - \bbu\|_2^2 - c_4  |\bbu - \bbv|^2.
\]
Denoting $Y = \cI_{vv}+\cI_{xv} +\cI_{xx} + C \ocE$ we observe that
\[
Y \sim \cI_{vv}+\cI_{xx} + a \| \bv \|^2_{L^2(\rho)} + b \|\bu - \bbu\|_2^2  + c  |\bbu - \bbv|^2.
\]
So, from the above we obtain 
\[
\ddt Y \lesssim - Y.
\]
This proves exponential relaxation.

\end{proof}


%
%
%
%
%
%
%
%
%
%
%
%

\end{document}